\renewcommand{\baselinestretch}{\baselinestretch}
\renewcommand{\baselinestretch}{1.1}
\numberwithin{equation}{section}
\newtheorem{thm}{Theorem}[section]
\newtheorem{lem}[thm]{Lemma}
\newtheorem{cor}[thm]{Corollary}
\newtheorem{exam}[thm]{Example}
\theoremstyle{definition}
\newtheorem{defn}[thm]{Definition}
\numberwithin{equation}{section}
\newcommand{\ra}{{\rightarrow}}
\newcommand{\gen}{\text{gen}}
\newcommand{\spn}{\text{spn}}
\newcommand{\ord}{\text{ord}}
\newcommand{\z}{{\mathbb Z}}
\newcommand{\q}{{\mathbb Q}}
\newcommand{\e}{{\epsilon}}
\newcommand{\gr}{{\mathfrak{G}_{L,p}(m)}}
\begin{document}

\title{genus-correspondences respecting spinor genus}

\author{Jangwon Ju and Byeong-Kweon Oh }

\address{Department of Mathematical Sciences, Seoul National University, Seoul 08826, Korea}
\email{jjw@snu.ac.kr}

\address{Department of Mathematical Sciences and Research Institute of Mathematics, Seoul National University, Seoul 08826, Korea}
\email{bkoh@snu.ac.kr}
\thanks{This work of the second author was supported by the National Research Foundation of Korea (NRF-2014R1A1A2056296).}

\subjclass[2000]{Primary 11E12, 11E20}

\keywords{genus-correspondence, spinor genus}

\begin{abstract} For two positive definite integral ternary quadratic forms $f$ and $g$ and a positive integer $n$,  if $n\cdot g$ is represented by $f$ and $n\cdot dg=df$, then the pair $(f,g)$ is called a {\it representable pair by scaling $n$}. The set of all representable pairs in $\gen(f)\times \gen(g)$ is called a genus-correspondence. In \cite {Jagy}, Jagy conjectured that  if $n$ is square free and the number of spinor genera in the genus of $f$ equals to the number of spinor genera in the genus of $g$, 
then such a genus-correspondence respects spinor genus in the sense that  for  any representable pairs 
$(f,g), (f',g')$ by scaling $n$,  $f' \in \spn(f)$ if and only if   $g' \in \spn(g)$.
In this article, we show that by giving a counter example, Jagy's conjecture does not hold. Furthermore,  we provide a necessary and sufficient condition for a genus-correspondence to respect spinor genus.
\end{abstract}

\maketitle \pagestyle{myheadings}
 \markboth {Jangwon Ju and Byeong-Kweon Oh}{genus-correspondences respecting spinor genus}

\maketitle

\section{Introduction} 

For a positive definite integral ternary quadratic form 
$$
f(x,y,z)=ax^2+by^2+cz^2+pyz+qzx+rxy \  \ (a,b,c,p,q,r \in \z),
$$ 
it is quite an old problem determining the set $Q(f)$ of all positive integers $k$ such that $f(x,y,z)=k$ has an integer solution.  If the class number of $f$ is one, then one may easily compute the set $Q(f)$ by using, so called, the local-global principle. However, if the class number of $f$ is bigger than $1$,  then  determining the set $Q(f)$ exactly seems to be quite difficult, except some very special ternary quadratic forms. If the integer $k$ is sufficiently large, then   
the theorem of Duke and Schulze-Pillot  in \cite{ds} implies that if $k$ is primitively represented by the spinor genus of $f$, then $k$ is represented by $f$ itself.     

Recently,  W. Jagy proved in \cite{Jagy} that for any square free integer $k$ that is represented by a sum of two integral squares, it is represented by any ternary quadratic form in the spinor genus $x^2+y^2+16kz^2$. To prove this, he introduced, so called {\it a genus-correspondence}, and proved some interesting properties on the genus-correspondence. To be more precise, 
let $\gen(f)$ ($\spn(f)$) be the set of genus (spinor genus, respectively) of $f$, for any ternary quadratic form $f$. 
Let $f$ and $g$ be positive definite integral ternary quadratic forms, and assume that there is a positive integer $n$ such that 
\begin{equation}\label{gen-co}
\text{$n\cdot g$ is represented by $f$} \qquad \text{and} \qquad  n\cdot dg=df.
\end{equation}
  In this article, we denote such a pair $(f,g)$ by a {\it representable pair by scaling $n$}.  Note that $n\cdot f$ is also represented by $g$ for any representable pair $(f,g)$ by scaling $n$.
   As stated in \cite {Jagy}, W. K. Chan proved that for any ternary quadratic form $f'\in \gen(f)$,  there is a ternary quadratic form $g'\in \gen(g)$ such that $(f',g')$ is a representable pair by scaling $n$, and conversely  for any $\tilde g \in \gen(g)$, there is an $\tilde f \in \gen(f)$ such that $(\tilde f,\tilde g)$ is also a representable pair by scaling $n$.  Jagy defined the set of representable pairs by scaling $n$  by a {\it genus-correspondence} and proved some properities on a genus-correspondence. He also conjectured that if $n$ is square free and the number of spinor genera in the genus of $f$ equals to the number of spinor genera in the genus of $g$, 
then such a genus-correspondence {\it respects spinor genus} in the sense that  for  any representable pairs 
$(f,g), (f',g')$ by scaling $n$, where  $f' \in \gen(f)$ and $g' \in \gen(g)$,
\begin{equation} \label{respect}
f' \in \spn(f) \qquad \text{if and only if} \qquad  g' \in \spn(g).
\end{equation}

In this article, we give an  example such that Jagy's conjecture does not hold. In fact, the concept of ``genus-correspondence" in \cite{Jagy} is a little bit ambiguous. We modify the notion of a genus-correspondence as follows:  For a positive integer $n$, let $\mathfrak C$ be a subset of $\gen(f)\times \gen(g)$ such that each pair in $\mathfrak C$ is a representable pair by scaling $n$.  We say $\mathfrak C$ is a genus-correspondence if  for any $f' \in \gen(f)$, there is an $g' \in \gen(g)$ such that $(f',g') \in \mathfrak C$, and conversely,  for any $\tilde g\in \gen(g)$, there is an $\tilde f \in \gen(f)$ such that $(\tilde f,\tilde g) \in \mathfrak C$. Note that the set of all representable pairs by scaling $n$ is a genus-correspondence.  We show that without assumption that $n$ is square free, there is a genus-correspondence respecting spinor genus if the number of spinor genera in $\gen(f)$ is equal to the number of spinor genera in $\gen(g)$. 

In Section 5, we discuss when Jagy's original conjecture is true. We provide a necessary and sufficient condition for the genus-correspondence consisting of all representable pairs by scaling $n$ in $\gen(f)\times \gen(g)$ to respect spinor genus under the assumption that $n$ is square free and  the number of spinor genera in $\gen(f)$ is equal to the number of spinor genera in $\gen(g)$. 
 
The subsequent discussion will be conducted in the better adapted geometric language of quadratic spaces and lattices. The term {\it lattice} will always refer to a {\it non-classic integral} $\z$-lattice on an $n$-dimensional positive definite quadratic space $\q$.  Here a $\z$-lattice $L$ is said to be non-classic integral if the norm ideal $\mathfrak{n}(L)$ of $L$ is contained in $\z$. 
 The discriminant of a lattice $L$ is denoted by $dL$ and the number of (proper) spinor genera in $\gen (L)$ is denoted by $g(L)$. For any  rational number $a$, $L^a$ is the lattice whose bilinear map $B$ is scaled by $a$. 
 
 Let $L= \z x_1 + \z x_2 + \cdots + \z x_k$ be a $\z$-lattice of rank $k$. We write
$$
L \simeq (B(x_i,x_j)).
$$
The right hand side matrix is called a matrix presentation of $L$. If $L$ admits an orthogonal basis $\{x_1,x_2,\dots,x_k\}$, then we simply write
$$
L \simeq \langle Q(x_1),Q(x_2),\dots,Q(x_k) \rangle.
$$
Throughout this paper, we say a $\z$-lattice $L$ is {\it primitive} if the norm ideal $\mathfrak{n}(L)$ is exactly $\z$. For a prime $p$, the group of units in $\z_p$ is denoted by $\z_p^{\times}$. Unless confusion arises, we will simply use $\Delta_p$ to denote a non-square element in $\z_p^{\times}$, when $p$ is odd.

We denote by $\langle a,b,c,s,t,u \rangle$ for the ternary $\z$-lattice with a matrix presentation 
$$
\begin{pmatrix}a&u&t\\u&b&s\\t&s&c\end{pmatrix},
$$
for convenience. For any $\z$-lattice $L$, the equivalence class containing $L$ up to isometry is denoted by $[L]$. 
 For any integer $a$, we say that $\frac a2$ is divisible by a prime $p$ if $p$ is odd and $a \equiv 0~(\text{mod}~p)$, or $p=2$ and $a\equiv0~(\text{mod}~4)$. 

Any unexplained notations and terminologies can be found in \cite{ki} or \cite{om}. 

\section{Watson's transformations on the set of spinor genera}

Let $L$ be a non-classic integral $\z$-lattice on a quadratic space $V$. For a prime $p$, we define
$$
\Lambda_p(L)= \{ x \in L : Q(x + z) \equiv Q(z) \  (\text{mod} \ p) \mbox{ for all $z \in L$}\}.
$$
Let $\lambda_p(L)$ be the primitive lattice obtained from $\Lambda_p(L)$ by scaling $V=L\otimes \mathbb Q$ by a suitable rational number. Recall that a $\z$-lattice $L$ is called primitive if $\mathfrak n(L)=\z$.
 For general properties of $\Lambda_p$-transformation, see \cite{lambda} and \cite{lambda2}.

For $L' \in \gen (L)$ ($L' \in \spn (L)$) and any prime $p$, one may easily show that $\lambda_p(L') \in \gen(\lambda_p(L))$ ($\lambda_p(L') \in \spn (\lambda_p(L))$, respectively). It is well known that as a map,
\begin{equation}\label{lambda5}
\lambda_p:\gen(L)\longrightarrow\gen(\lambda_p(L))
\end{equation}
is surjective. Furthermore, $\lambda_p(\spn(L))=\spn(\lambda_p(L))$. If we define $\gen(L)_S$ the set of all spinor genera in $\gen(L)$, then the map 
$$
\lambda_p : \gen(L)_S \longrightarrow \gen(\lambda_p(L))_S
$$ 
given by $\spn(L')\mapsto \spn(\lambda_p(L'))$ for any $\spn(L')\in\gen(L)_S$ is well-defined and surjective. In particular, $g(L)\geq g(\lambda_p(L))$ for any prime $p$.

Henceforth, $L$  is always a positive definite non-classic integral ternary $\z$-lattice. 

\begin{defn}\label{H-type defn}
For a $\z$-lattice $L$ and a prime $p$, if $g(L)=g(\lambda_p(L))$, then we say the lattice $L$ is of $H$-type at $p$.
\end{defn}
From the definition, if $L$ is of $H$-type at $p$, then so is $L'$ for any $L' \in \gen(L)$.
\begin{lem}\label{odd}
Let $L$ be a primitive ternary $\z$-lattice and let $p$ be an odd prime. Assume that after scaling by a unit in $\z_p$ suitably, \
$$
L_p \simeq \langle1,p^\alpha\e_1,p^\beta\e_2\rangle,
$$
where $\alpha,\beta (\alpha\leq\beta)$ are nonnegative integers and $\e_1,\e_2 \in \{1,\Delta_p\}$. If $L$ is not of $H$-type at $p$, then the pairs $(\alpha,\beta)$, $(\e_1,\e_2)$ satisfy one of the conditions in Table 1.
\end{lem}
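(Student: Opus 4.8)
The plan is to translate the condition ``$L$ is of $H$-type at $p$'' into a purely local statement about spinor norm groups at $p$, and then to read off the failures from explicit Jordan-form computations. Recall the idelic description of the number of proper spinor genera: writing $\theta_q:=\theta(O^+(L_q))\subseteq \q_q^{\times}/\q_q^{\times 2}$ for the local spinor norm group at each place $q$, one has $g(L)=[\mathbb{J}_{\q}:\q^{\times}\prod_q\theta_q]$. The transformation $\lambda_p$ alters $L$ only at $p$: globally it rescales $\Lambda_p(L)$ by a power of $p$, and since the spinor norm group $\theta(O^+(\,\cdot\,))$ is invariant under scaling (two reflections contribute a square factor) and since at $q\neq p$ the lattice $\Lambda_p(L)_q$ is just $L_q$, we get $\theta(O^+(\lambda_p(L)_q))=\theta_q$ for every $q\neq p$ and at the archimedean place. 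Consequently the idele subgroups for $L$ and $\lambda_p(L)$ differ only in their $p$-component, so $g(L)=g(\lambda_p(L))$ as soon as $\theta(O^+(L_p))=\theta(O^+(\lambda_p(L)_p))$. Combined with the surjectivity of $\lambda_p$ on spinor genera stated above (which gives $g(L)\geq g(\lambda_p(L))$ and $\theta(O^+(L_p))\subseteq \theta(O^+(\lambda_p(L)_p))$), I obtain the clean equivalence: $L$ fails to be of $H$-type at $p$ \emph{if and only if} $\lambda_p$ \emph{strictly enlarges} the local spinor norm group, i.e. $\theta(O^+(L_p))\subsetneq\theta(O^+(\lambda_p(L)_p))$. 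This is a condition at $p$ alone, which is exactly what is needed to produce a necessary condition on $(\alpha,\beta),(\e_1,\e_2)$.

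With this reduction in hand, the second step is computational. Working in the orthogonal basis realizing $L_p\simeq\langle1,p^{\alpha}\e_1,p^{\beta}\e_2\rangle$, I would first identify $\Lambda_p(L)_p$ explicitly: since $p$ is odd, the defining congruence forces $x$ with $B(x,L_p)\subseteq p\z_p$, so $\Lambda_p(L)_p$ is spanned by $pe_1$ together with $e_i$ or $pe_i$ according to whether the corresponding exponent is positive or zero. Rescaling to a primitive lattice then gives the Jordan shape of $\lambda_p(L)_p$ as an explicit function of the cases $\alpha=\beta=0$, $0=\alpha<\beta$, $0<\alpha=\beta$, and $0<\alpha<\beta$, tracking in each case the resulting exponents and the unit classes. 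Next I would compute both $\theta(O^+(L_p))$ and $\theta(O^+(\lambda_p(L)_p))$ from their Jordan data using the classical formulas for spinor norms of local integral rotations for odd $p$ (as in the references): a Jordan component of rank $\geq2$ forces $\z_p^{\times}$ into the group, while the presence of a uniformizer (an element of odd valuation) in the group is governed by the parity of differences of consecutive scales together with the relevant unit classes $\e_1,\e_2$.

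Comparing the two groups case by case then isolates precisely when the containment is strict. In the generic configurations the enlargement does not occur: for instance when $\alpha=\beta=0$ one checks $\lambda_p(L)_p\simeq L_p$, and in the cases where a rank-$2$ block already forces $\z_p^{\times}$ into $\theta(O^+(L_p))$ while the valuation content is unchanged, the two groups coincide and $L$ is of $H$-type. Strict enlargement survives only for the finitely many residual configurations of $(\alpha,\beta)$ and $(\e_1,\e_2)$, and these are exactly the entries of Table 1. I expect the main obstacle to lie in the \emph{boundary} cases—small $\alpha,\beta$ and the instances where the scales differ by an odd amount—where whether $\theta(O^+(L_p))$ already contains a uniformizer or the full group $\z_p^{\times}$ depends delicately on the unit classes $\e_1,\e_2$ (and, through them, on the residue of $p$). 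These are precisely the cases that must be tabulated carefully, since it is there that $\lambda_p$ can create a uniformizer or a new unit square class in the spinor norm group and thereby collapse spinor genera; handling them correctly, rather than the generic cases, is where the real work of the lemma resides.
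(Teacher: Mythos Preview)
Your approach matches the paper's: both reduce via the idelic formula $g(L)=[J_{\q}:P_DJ_{\q}^L]$ to comparing $\theta(O^+(L_p))$ with $\theta(O^+(\lambda_p(L)_p))$, noting these groups agree at every place $q\ne p$, and then verify case by case that they coincide whenever $(\alpha,\beta),(\e_1,\e_2)$ lies outside Table~1. One small caveat: your asserted biconditional ``not $H$-type $\Leftrightarrow$ strict local enlargement'' is stronger than what you justify or need---strict enlargement of $\theta(O^+(L_p))$ does not automatically force $g(L)\neq g(\lambda_p(L))$, since the extra square classes may already lie in $P_DJ_{\q}^L$ via other places---but only the implication ``not $H$-type $\Rightarrow$ strict enlargement'' is used, and that direction is sound.
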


\begin{center}
\begin{footnotesize}
\renewcommand{\arraystretch}{1.5}
\begin{tabular}{ll}\multicolumn{2}{l}{\hspace{0.5cm}Table 1 (odd case)}\\
\hline
$(\alpha, \beta)$ & $(\epsilon_1, \epsilon_2)$\\ \hline
$(1,2)$&$(1,1)$\\
$(1,2)$&$(\Delta,1)$\\
$(2,k),~(k \geq 3)$&$(1,1)$\\
$(2,2k+1),~(k\geq1)$&$(1,\Delta)$\\ \hline
\end{tabular}
\end{footnotesize}
\end{center}

\begin{proof}
By 102:7 of \cite{om}, we know that 
$$
g(L)=[J_{\q} : P_DJ_{\q}^L] \qquad \text{and} \qquad g(\lambda_p(L)) = [J_{\q} : P_DJ_{\q}^{\lambda_p(L)}],
$$
where $D$ is the set of positive rational numbers.
Clearly, $\theta(O^+(\lambda_p(L)_q))=\theta(O^+(L_q))$ for any prime $q\neq p$.
Now one may easily check that if the pairs $(\alpha,\beta)$, $(\e_1,\e_2)$  do not satisfy one of the conditions in Table 1, then $\theta(O^+(\lambda_p(L)_p))=\theta(O^+(L_p))$. This implies that $g(L)=g(\lambda_p(L))$. 
\end{proof}

\begin{lem}\label{even}
Let $L$ be a primitive ternary $\z$-lattice.   If $L$ is not of $H$-type at $2$, then there is an $\eta \in \z_2^{\times}$ such that
$$
(L^{\eta})_2\simeq\langle1,2^\alpha\e_1,2^\beta\e_2\rangle,
$$
where  $\alpha, \beta (\alpha \leq \beta)$ are nonnegative integers and $\e_1,\e_2\in\z_2^{\times}$,  and the pairs $(\alpha,\beta)$, $(\e_1,\e_2)$ satisfy one of the conditions in Table 2. 
\end{lem}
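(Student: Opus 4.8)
The plan is to mirror the proof of Lemma \ref{odd} and reduce everything to a comparison of local spinor norm groups at the prime $2$. By 102:7 of \cite{om} we again have
$$
g(L)=[J_{\q}:P_DJ_{\q}^{L}]\qquad\text{and}\qquad g(\lambda_2(L))=[J_{\q}:P_DJ_{\q}^{\lambda_2(L)}],
$$
and since $\lambda_2$ changes the localization of $L$ only at $2$, we have $\theta(O^+(\lambda_2(L)_q))=\theta(O^+(L_q))$ for every prime $q\neq 2$ and at the real place. Because the map on spinor genera induced by $\lambda_2$ is surjective (so spinor genera can only merge), the $2$-adic spinor norm group can only grow under $\lambda_2$; thus failure of $H$-type, i.e. $g(L)>g(\lambda_2(L))$, is equivalent to the strict containment $\theta(O^+(L_2))\subsetneq\theta(O^+(\lambda_2(L)_2))$ inside $\q_2^{\times}/(\q_2^{\times})^2$. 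Arguing by contraposition as in Lemma \ref{odd}, it therefore suffices to determine precisely those $2$-adic ternary lattices whose local spinor norm group is strictly enlarged by $\lambda_2$, and to check that each of them is diagonalizable in the normalized shape claimed.

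Next I would reduce to the diagonalizable case. Writing a Jordan splitting $L_2=L_{(0)}\perp L_{(1)}\perp\cdots$ into modular components of increasing scale, the only obstruction to diagonalization over $\z_2$ is the presence of an even binary unimodular Jordan block, of hyperbolic type $\begin{pmatrix}0&1\\1&0\end{pmatrix}$ or of type $\begin{pmatrix}2&1\\1&2\end{pmatrix}$. The key claim to establish here is that whenever such an improper block occurs the group $\theta(O^+(L_2))$ is already the full group $\q_2^{\times}/(\q_2^{\times})^2$: the proper rotations supported on an even binary block, whose spinor norms are computed from the symmetries $\tau_v$ with $\theta(\tau_v)=Q(v)\,(\q_2^{\times})^2$ as in \cite{om}, already generate everything. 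Once $\theta(O^+(L_2))$ is maximal it cannot be properly enlarged, so by the containment above $L$ is automatically of $H$-type at $2$. This discards all non-diagonalizable lattices. Since $L$ is primitive, $\mathfrak n(L)=\z$ forces a scale-$0$ odd unimodular component, so after the reduction $L_2\simeq\langle u_0,2^{\alpha}u_1,2^{\beta}u_2\rangle$ with units $u_i\in\z_2^{\times}$ and $\alpha\leq\beta$; scaling the bilinear form by $\eta=u_0^{-1}\in\z_2^{\times}$ (which preserves $H$-type, as $\lambda_2(L^{\eta})=\lambda_2(L)^{\eta}$) normalizes this to $(L^{\eta})_2\simeq\langle1,2^{\alpha}\e_1,2^{\beta}\e_2\rangle$.

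It then remains to carry out the explicit spinor-norm bookkeeping on the normalized diagonal lattices. For each $\langle1,2^{\alpha}\e_1,2^{\beta}\e_2\rangle$ I would compute $\theta(O^+(L_2))$ from the generating symmetries together with the standard $2$-adic unit-block contributions recorded in \cite{om}, and in parallel compute the effect of $\lambda_2$, which by the description in \cite{lambda} and \cite{lambda2} divides out the common $2$-power from the scales and reprimitivizes, thereby shifting the exponents $(\alpha,\beta)$. Comparing the two groups, the equality $\theta(O^+(L_2))=\theta(O^+(\lambda_2(L)_2))$ will hold in all configurations except a short list governed by the residues of $\e_1,\e_2$ modulo $8$ and by the gaps $\alpha$ and $\beta-\alpha$; transcribing that list yields exactly the rows of Table 2.

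The principal obstacle is the case analysis of the third step. In contrast to the odd prime, $\q_2^{\times}/(\q_2^{\times})^2$ has order $8$, and the local spinor norm depends delicately on $\e_1,\e_2\bmod 8$ and on whether consecutive Jordan scales differ by $0$, $1$, or at least $2$; the boundary configurations $\alpha=0$, $\alpha=\beta$, and a single scale gap each degenerate the rotation generators and must be handled separately. Equally delicate is the reduction in the second step: rigorously verifying that every improper binary block forces a maximal spinor norm — and hence that only genuinely diagonal lattices can fail to be of $H$-type — is what underwrites the diagonal normalization asserted in the statement, and it requires its own local computation distinct from the diagonal cases.
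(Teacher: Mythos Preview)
Your proposal is correct and follows the same route as the paper, which simply says the argument parallels Lemma~\ref{odd} and cites \cite{local} (Earnest--Hsia) for the $2$-adic spinor norm computations. One small caveat: the asserted \emph{equivalence} between failure of $H$-type and strict containment $\theta(O^+(L_2))\subsetneq\theta(O^+(\lambda_2(L)_2))$ is stronger than what is needed or proved (the extra local square classes could in principle already be absorbed by $P_D$ globally); but only the direction ``equal local groups $\Rightarrow$ $H$-type'' is required for the contrapositive, so your argument stands.
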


\begin{proof}
The proof is quite similar to the above lemma. For the computation of the spinor norm map, see \cite{local}. 
\end{proof}

\begin{center}
\begin{footnotesize}
\renewcommand{\arraystretch}{1.5}
\begin{tabular}{ll||ll}\multicolumn{4}{l}{\hspace{4cm}Table 2 (Even case)}\\
\hline$(\alpha, \beta)$ & $(\epsilon_1, \epsilon_2)$ & $(\alpha, \beta)$ & $(\epsilon_1, \epsilon_2)$ \\ \hline
$(0,4)$ &$\epsilon_1 \equiv \epsilon_2 \equiv 1~ (4)$ & $(5,6)$ &$2\epsilon_1+\epsilon_2 \in Q(\langle1,2\epsilon_1 \rangle)$ \\

$(1,6)$ &$\epsilon_2 \in Q(\langle 1, 2\epsilon_1 \rangle)$ & $(5,7)$ & $\epsilon_1\epsilon_2\equiv1~(4)$\\

$(2,2)$ &$\epsilon_1=1,~\epsilon_2\equiv3~(4)$\quad & $(5,8)$ &$\epsilon_2\equiv2\epsilon_1+5~(8)$ \\

$(2,4)$ & $\epsilon_1\equiv1~(4)$& $(5,9)$ & $\epsilon_1\epsilon_2\equiv 1~(4)$\\

$(2,6)$ & $\epsilon_1\equiv1~(4)$ & $(5,2k),~(k\geq 5)$ &$1+2\epsilon_1\nequiv \epsilon_2~(8)$ \\

$(2,2k-1),~(k\geq 4)$ &$\e_1\equiv2\e_2+3~(8)$& 
$(5,2k+1),~(k\geq 5)$ & $1+2\epsilon_1\nequiv \epsilon_1\epsilon_2~(8)$ \\

$(2,2k),~(k\geq 4)$ & $\epsilon_1\equiv1~(4)$ & $(6,7)$ & $5\notin Q(\langle\epsilon_1,2\epsilon_2\rangle)$ \\

$(3,6)$ & $\epsilon_2\equiv1~(8)$ & $(6,9)$ & $5\notin Q(\langle\epsilon_1,2\epsilon_2\rangle)$\\

$(4,4)$ & $\epsilon_1\equiv \epsilon_2\equiv1~(4)$& $(6,2k-1),~(k\geq 6)$ &$\epsilon_1\nequiv5~(8)$ \\

$(5,5)$ & $\epsilon_2\equiv3\epsilon_1+6~(8)$ & $(6,2k),~(k\geq6)$ & $\epsilon_1,\epsilon_2\nequiv5~(8)$ and\\ 
&&&$\e_1\nequiv \e_2 \Rightarrow \e_1 ~\text{or}~\e_2 \equiv 1 ~(8)$\\
\hline
\end{tabular}
\end{footnotesize}
\end{center}

\begin{lem} \label{H-type rmk}
Let $p$ be a prime and let $L$ be a primitive ternary $\z$-lattice.  As a function from $\gen(L)_S$ to $\gen(\lambda_p(L))_S$, $\lambda_p$ is 
a $2^a$ to one function for some $a=0,1$ or $2$. 
\end{lem}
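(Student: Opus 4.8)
The plan is to exploit the idelic description of the set of proper spinor genera recalled in the proof of Lemma \ref{odd}. By 102:7 of \cite{om}, the sets $\gen(L)_S$ and $\gen(\lambda_p(L))_S$ are in bijection with the finite abelian groups $J_\q/P_DJ_\q^L$ and $J_\q/P_DJ_\q^{\lambda_p(L)}$, where $J_\q^L=\prod_q\theta(O^+(L_q))$ is the group of idelic spinor norms. Since each local factor $\theta(O^+(L_q))$ is a subgroup of $\q_q^\times$ containing $(\q_q^\times)^2$, the group $J_\q^L$ contains the squares of $J_\q$, so both quotients are elementary abelian $2$-groups; in particular $g(L)$ and $g(\lambda_p(L))$ are powers of $2$. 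First I would record that the asserted well-definedness of $\spn(L')\mapsto\spn(\lambda_p(L'))$, combined with the equality $\theta(O^+(\lambda_p(L)_q))=\theta(O^+(L_q))$ for all $q\neq p$, is exactly equivalent to the inclusion $P_DJ_\q^L\subseteq P_DJ_\q^{\lambda_p(L)}$, under which $\lambda_p$ becomes the induced quotient homomorphism. Granting this, $\lambda_p$ is a surjective homomorphism of finite abelian $2$-groups, so every fibre has the same cardinality $|\ker\lambda_p|=g(L)/g(\lambda_p(L))=2^a$; the power-of-two shape is then automatic, and it remains only to prove $a\leq 2$.

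Next I would express the kernel size as a local index. Using the tower $P_DJ_\q^L\subseteq P_DJ_\q^{\lambda_p(L)}\subseteq J_\q$ and the fact that $J_\q^L$ and $J_\q^{\lambda_p(L)}$ share every component away from $p$, one obtains
$$
2^a=\frac{g(L)}{g(\lambda_p(L))}=[P_DJ_\q^{\lambda_p(L)}:P_DJ_\q^L]\leq[\theta(O^+(\lambda_p(L)_p)):\theta(O^+(L_p))\cap\theta(O^+(\lambda_p(L)_p))],
$$
where the last index is computed entirely inside $\q_p^\times/(\q_p^\times)^2$. For odd $p$ this finishes the argument at once: the ambient group $\q_p^\times/(\q_p^\times)^2$ has order $4$, so the displayed index is a power of $2$ not exceeding $4$, whence $a\leq 2$.

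The remaining and genuinely delicate case is $p=2$, where $\q_2^\times/(\q_2^\times)^2$ has order $8$ and the naive bound only yields $a\leq 3$. Here I would have to rule out a jump of index $8$, that is, the possibility that $\theta(O^+(\lambda_2(L)_2))$ is all of $\q_2^\times/(\q_2^\times)^2$ while $\theta(O^+(L_2))$ is trivial. Both extremes do occur in isolation: a ternary $\z_2$-lattice whose unary Jordan components are widely separated in scale, such as $\langle 1,2^{10},2^{20}\rangle$, has all products $Q(e_i)Q(e_j)$ square and hence trivial spinor norm group, while $\langle1,1,1\rangle$ has the full group. So the content is that a single application of $\Lambda_2$ cannot realise this jump. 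The hard part is precisely this step: one must invoke the explicit local spinor norm computation of \cite{local} and analyse how $\Lambda_2$ modifies the Jordan splitting at $2$. The point to extract is that one step of $\Lambda_2$ alters the adjacency pattern of consecutive Jordan components only by a bounded amount, so at most two new generators can enter the spinor norm group; consequently $\theta(O^+(L_2))$ and $\theta(O^+(\lambda_2(L)_2))$ always share a common subgroup of index at most $4$ in $\q_2^\times/(\q_2^\times)^2$, giving $a\leq 2$. This is the same local analysis that produces Table 2, and carrying it through every Jordan type is the principal obstacle of the proof.
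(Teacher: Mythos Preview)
Your proposal is essentially correct and follows the same underlying approach as the paper, though packaged differently. The paper's proof establishes the equal-fibre-size property by a direct transitivity argument: for any split rotation $\Sigma\in J_V$ one has $\lambda_p(\Sigma L)=\Sigma\lambda_p(L)$, so the action of $J_V$ permutes the fibres transitively. This commutativity is exactly the substantive fact behind your assertion that $\lambda_p$ ``becomes the induced quotient homomorphism''; well-definedness of $\spn(L')\mapsto\spn(\lambda_p(L'))$ alone does not tell you \emph{which} map on $J_\q/P_DJ_\q^L$ you obtain, and it is the identity $\lambda_p\circ\Sigma=\Sigma\circ\lambda_p$ that forces it to be the natural quotient. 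So your step (1) is right, but the verification is precisely what the paper does explicitly with split rotations.

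For the bound $a\le 2$, both arguments reduce to the same local spinor norm computation. The paper simply records the outcome of that computation in the ``Note that'' display: when $L$ is not of $H$-type at $p$, the index $[\theta(O^+(\lambda_p(L)_p)):\theta(O^+(L_p))]$ equals $4$ in the single $2$-adic case $(\alpha,\beta)=(2,4)$ with $\e_1\equiv\e_2\equiv1\pmod4$, and equals $2$ otherwise. This is the endpoint of the Table~1/Table~2 analysis you describe as ``the principal obstacle''; the paper does not reprove it either, but states it as the conclusion of Lemmas~\ref{odd} and~\ref{even}. Your odd-$p$ shortcut, bounding crudely by $|\q_p^\times/(\q_p^\times)^2|=4$, is in fact cleaner than invoking Table~1, since no case analysis is needed. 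For $p=2$ neither proof escapes the case check of \cite{local}; you correctly identify this as the genuine work, and the paper's proof is equally terse at that point.
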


\begin{proof}  
Note that if $L$ is not of $H$-type at $p$,  then
$$
|\theta(O^+(\lambda_p(L_p)))|=\begin{cases} 4\cdot |\theta(O^+(L_p))| &\text{if}~p=2,~ (\alpha,\beta)=(2,4)  \ \text{and}~\e_1\equiv\e_2\equiv1~(4),\\
2\cdot|\theta(O^+(L_p))| &\text{otherwise}.\end{cases}
$$
 Suppose that $\lambda_p(\spn(L))=\spn(M)$ with $\lambda_p(L)=M$. For any $\spn(M')\in \gen(M)_S$,  there is a split rotation $\Sigma\in J_V$ such that $M'=\Sigma M$. Since
$$
\lambda_p(\Sigma L)=\Sigma\lambda_p(L)=\Sigma M=M',
$$
we have $\lambda_p(\spn(\Sigma L))=\spn(M')$.
Note that $L'\in\spn(L'')$ if and only if $\Sigma L'\in\spn(\Sigma L'')$, for any $L',L''\in\gen(L)$. 
Therefore $\vert\lambda_p^{-1}(\spn(M))\vert$ is independent of the choices of $M \in \gen(\lambda_p(L))$. 
The lemma follows from this and the fact that $\lambda_p$ is surjective and the number of spinor genera in any genus of a ternary quadratic form is a power of $2$.  \end{proof}

\section{$\Gamma_p$-transformations on the set of spinor genera}

Let $V$ be a (positive definite) ternary quadratic space and
let $L$ be a primitive ternary $\z$-lattice on $V$. Let $p$ be a prime such that  
$$
L_p \simeq \begin{pmatrix} 0&\frac12\\ \frac12&0\end{pmatrix} \perp \langle \epsilon \rangle,~ \text{where}~ \epsilon \in \z_p^{\times}.
$$ 
For any nonnegative integer $m$, let $\mathcal G_{L,p}(m)$ be a genus on $W$ such that each $\z$-lattice $M \in \mathcal G_{L,p}(m)$ satisfies
$$
 M_p \simeq  \begin{pmatrix} 0&\frac12\\ \frac12&0\end{pmatrix} \perp \langle \epsilon p^m \rangle \quad \text{and} \quad M_q \simeq (L^{p^m})_q \ \text{ for any $q \ne p$}.
$$
Here $W=V$ if $m$ is even, and $W=V^p$ otherwise. 

For a nonnegative integer $m$, let $N \in \mathcal{G}_{L,p}(m+1)$ be a primitive ternary $\z$-lattice. 
By Weak Approximation Theorem, there exists a basis $\{x_1, x_2, x_3 \}$ for $N$ such that
$$
(B(x_i,x_j))\equiv\begin{pmatrix}0&\frac12\\ \frac12&0\end{pmatrix}\perp \langle p^{m+1} \delta\rangle \ (\text{mod} \ p^{m+2}),
$$
where $\delta$ is an integer not divisible by $p$. We define two sublattices of $N$
$$
\aligned
&\Gamma_{p,1}(N) = \z px_1 + \z x_2+ \z x_3, &\Gamma_{p,2}(N) = \z x_1 + \z px_2+ \z x_3.
\endaligned
$$
Note that $\Gamma_{p,i}(N)$ for $i=1,2$ depends on the choices of basis for $N$. However, the set $\{\Gamma_{p,1}(N), \Gamma_{p,2}(N)\}$ of sublattices of $N$ is independent of the choices of basis for $N$. In fact, these two sublattices are unique sublattices of $N$ with index $p$ whose norm is $p\z$. We say that a $\z$-lattice $M$ is {\it a $\Gamma_p$-descendant of $N$} if $M \simeq \Gamma_{p,i}^{\frac1p}(N)$ for some $i=1,2$.

\begin{lem}\label{exchange} Let $p,q$ be distinct  primes and let $N\in\mathcal{G}_{L,p}(m+1)$ for some nonnegative integer $m$. 
\begin{enumerate}
\item  If $M$ is a $\Gamma_p$-descendant of $N$, then $\lambda_q(M)$ is a $\Gamma_p$-descendant of $\lambda_q(N)$.

\item  Assume that $N\in \mathcal{G}_{L',q}(m'+1)$ for some nonnegative integer $m'$. Then any $\Gamma_q$-descendant of a $\Gamma_p$-descendant of $N$ is  a $\Gamma_p$-descendant of some $\Gamma_q$-descendant of $N$.
\end{enumerate}
\end{lem}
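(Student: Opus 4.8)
The plan is to exploit the fact that $\lambda_q$ and $\Gamma_p$ in part (1), and $\Gamma_q$ and $\Gamma_p$ in part (2), are transformations attached to the two \emph{distinct} primes $q$ and $p$, and hence commute up to isometry once the global scalings by $\frac1p$ (and, for $\lambda_q$, by a power of $q$) are tracked. I will phrase each assertion as a commuting relation between lattices lying inside the fixed quadratic space and verify it by comparing localizations prime by prime, using the basis-independence of the set $\{\Gamma_{p,1},\Gamma_{p,2}\}$ (the two index-$p$ sublattices of norm $p\z$) to work with whatever basis is convenient.

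For part (1), I would first check that $\lambda_q(N)$ again lies in a genus of the form $\mathcal{G}_{\lambda_q(L),p}(m+1)$, so that a $\Gamma_p$-descendant of $\lambda_q(N)$ is defined: the operator $\Lambda_q$ alters $N$ only at $q$, and the ensuing primitivization scales the whole space by a power of $q$, a unit in $\z_p$, so $\lambda_q(N)_p\simeq N_p$ up to a unit and still has the shape $\left(\begin{smallmatrix}0&\frac12\\\frac12&0\end{smallmatrix}\right)\perp\langle\epsilon' p^{m+1}\rangle$ at $p$. Next I would compare $\Lambda_q(\Gamma_{p,i}(N))$ with $\Gamma_{p,i}(\Lambda_q(N))$ localization by localization: at $p$ the operator $\Lambda_q$ is the identity, so both equal $\Gamma_{p,i}(N)_p$; at $q$ the index-$p$ sublattice $\Gamma_{p,i}(N)$ has the same $\z_q$-localization as $N$ (because $p\in\z_q^\times$), whence $\Lambda_q$ yields the same $q$-adic lattice on both sides; and at every other prime nothing happens. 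Since the factors $\frac1p$ and the primitivizing power of $q$ are central, the same total scaling applies to both, giving $\lambda_q(\Gamma_{p,i}^{1/p}(N))\simeq\Gamma_{p,i}^{1/p}(\lambda_q(N))$, i.e. $\lambda_q(M)$ is a $\Gamma_p$-descendant of $\lambda_q(N)$. The one point needing care is that the primitivizing exponent of $q$ is the same for $\lambda_q(M)$ and $\lambda_q(N)$; this holds because $M_q\simeq N_q^{1/p}$ has the same $q$-adic Jordan invariants as $N_q$.

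For part (2), the hypothesis $N\in\mathcal{G}_{L,p}(m+1)\cap\mathcal{G}_{L',q}(m'+1)$ gives $N$ the hyperbolic-plane-plus-unit shape at both $p$ and $q$. By the Weak Approximation Theorem I would choose a single basis $\{x_1,x_2,x_3\}$ of $N$ that is simultaneously $\Gamma_p$-adapted modulo $p^{m+2}$ and $\Gamma_q$-adapted modulo $q^{m'+2}$, with the hyperbolic plane carried by $x_1,x_2$ at both primes; this is possible since $p^{m+2}$ and $q^{m'+2}$ are coprime. With such a basis the two index-sublattice operations become coordinate rescalings that manifestly commute, e.g. $\Gamma_{q,1}(\Gamma_{p,1}(N))=\z pq\,x_1+\z x_2+\z x_3=\Gamma_{p,1}(\Gamma_{q,1}(N))$, and likewise for the other index pairs. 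I would then verify that the basis $\{px_1,x_2,x_3\}$ of $M=\Gamma_{p,1}^{1/p}(N)$ remains $q$-adapted, since multiplying a hyperbolic generator by the $q$-adic unit $p$ preserves the defining congruences modulo $q^{m'+2}$; hence the two $\Gamma_q$-descendants of $M$ may be computed from it, and matching modules and scalings ($\frac1{pq}B_N$ on both sides) shows each equals $\Gamma_{p,i}^{1/p}(\Gamma_{q,j}^{1/q}(N))$ for suitable $i,j$, proving the claim (in fact with the $\Gamma_p$- and $\Gamma_q$-indices preserved).

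The main obstacle I anticipate is the appeal to Weak Approximation in part (2): one must guarantee a global $\z$-basis of $N$ realizing the prescribed Jordan normal forms at $p$ and at $q$ simultaneously, with the hyperbolic plane occupying the \emph{same} two coordinates at both primes. Once this simultaneous normalization is secured, the remaining work is routine module- and scaling-bookkeeping, exactly parallel to the localization argument underlying part (1).
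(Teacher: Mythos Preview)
Your proposal is correct and follows essentially the same approach as the paper: both rest on the observation that for distinct primes $p,q$ one has $(\Gamma_{p,i}(N))_q=N_q$ and $(\Lambda_p(N))_q=N_q$, so each operation alters only the localization at its own prime and the two therefore commute. The paper compresses this into a two-line remark, while you spell out the prime-by-prime bookkeeping and, for part~(2), the simultaneous Weak Approximation explicitly; your concern about that step is unfounded, since the required congruences are imposed at the coprime moduli $p^{m+2}$ and $q^{m'+2}$ and hence can be satisfied by a single global basis.
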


\begin{proof} Note that if $p,q$ are distinct primes, then $(\Gamma_{p,i}(N))_q=N_q$ and $(\Lambda_p(N))_q=N_q$.     
The lemma follows directly from this. \end{proof}

In \cite{graph}, we defined a multi-graph $\mathfrak{G}_{L,p}(m)$ and proved some properties of this graph.  
For those who are unfamiliar with the notations, we introduce the definition of this multi-graph briefly: the set of vertices in $\gr$ is the set of equivalence classes in  $\mathcal G_{L,p}(m)$, say, $\{[M_1], [M_2], \ldots , [M_h] \}$. The set of edges is exactly the set of equivalence classes in $\mathcal G_{L,p}(m+1)$, say, $ \{[N_1], [N_2],\ldots,[N_k] \}$. For each equivalence class $[N_w] \in\mathcal G_{L,p}(m+1)$, two vertices contained in the edge $[N_w]$ are defined by $[\Gamma_{p,1}(N_w)^{\frac 1p}]$ and  $[\Gamma_{p,2}(N_w)^{\frac1p}]$ that are defined above. Note that both lattices are contained in $\mathcal G_{L,p}(m)$. Hence this graph might have loops or multiple edges.

 Two vertices $[T_i], [T_j] \in \mathfrak{G}_{L,p}(0)$ are connected by an edge if and only if there are $\z$-lattices $T_i' \in [T_i]$ and $T_j' \in [T_j]$ such that $T_i'$ and $T_j'$ are connected by an edge in the graph $Z(T,p)$ which is defined in \cite{sp1}. If two lattices $T_i ,T_j \in \mathcal G_{L,p}(0)$ are spinor equivalent, then  both $[T_i]$ and $[T_j]$ are contained in the same connected component. Moreover, the set of vertices in each connected component of $\mathfrak{G}_{L,p}(0)$ consists of  at most two spinor genera, and it consists of  only one spinor genus if and only if $\bold{j}(p) \in P_D J_{\q}^K$, where $D$ is the set of positive rational numbers and 
$$
\bold{j}(p) = (j_{q}) \in J_\q \quad \text{such that $j_p =p$ and $j_q = 1$ for any prime $q \ne p$}.
$$
 We say that $\mathfrak{G}_{L,p}(0)$ is of $O$-type if the set of vertices in the connected component of the graph $\mathfrak{G}_{L,p}(0)$ consists of  only one spinor genus, and it is of $E$-type otherwise.

Now, we consider the general case. 
For any positive integer $m$, we say that a graph $\mathfrak G_{L,p}(m)$ is of $E$-type if $m$ is even and 
$\mathfrak G_{L,p}(0)$ is of $E$-type, and it is of $O$-type otherwise.  

Assume that  $\mathfrak G_{L,p}(m)$ is of $E$-type and $M \in \mathcal G_{L,p}(m)$. Since the map 
$$
\lambda_p^{\frac m2} : \spn(T) \to \spn(\lambda_p^{\frac m2}(T))
$$
 is surjective for any $T \in \mathcal G_{L,p}(m)$, 
there is a $\z$-lattice $M' \in \mathcal G_{L,p}(m)$ such that $M' \not \in \spn(M)$ and $[M']$ is connected to $[M]$ by a path by Lemma 3.5 in \cite{graph}.
Note that $g(\mathcal G_{L,p}(m))=g(\mathcal G_{L,p}(m'))$ if and only if $m\equiv m' \pmod 2,$ where $g(\mathcal G_{L,p}(m))$ is the number of spinor genera contained in $\mathcal G_{L,p}(m)$.
In particular,
$g(\mathcal G_{L,p}(m))=g(\mathcal G_{L,p}(0))$ for any even $m$. So, every $\z$-lattice $M'$ satisfying the above condition is contained in a single spinor genus.  
 From the existence of such a $\z$-lattice $M'$, we may define  
$$
\text{Cspn}(M)=\begin{cases} \text{spn}(M) \quad &\text{if $\mathfrak G_{L,p}(m)$ is of $O$-type,}\\
 \text{spn}(M) \cup \spn(M')  \quad &\text{otherwise}.
 \end{cases}
$$
In Lemma 3.10 of \cite{graph}, we proved that the set of all vertices in the connected component of $\mathfrak G_{L,p}(m)$ containing $[M]$ is the set of equivalence classes in $\text{Cspn}(M)$. 

\begin{lem} \label{coco} For an integer $m\geq0$, let $[N] \in \mathcal G_{L,p}(m+1)$ be an edge of the graph $\mathfrak G_{L,p}(m)$.  Then the set of all edges in the connected component of $\mathfrak G_{L,p}(m)$ containing $[N]$  is the set of all classes in $\text{Cspn}(N)$.
\end{lem}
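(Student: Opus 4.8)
The plan is to exploit the identification of the edges of $\mathfrak G_{L,p}(m)$ with the vertices of $\mathfrak G_{L,p}(m+1)$: both are the equivalence classes in $\mathcal G_{L,p}(m+1)$. Under this identification I will prove that the edge-connected component of $[N]$ in $\mathfrak G_{L,p}(m)$ coincides with the vertex-connected component of $[N]$ in $\mathfrak G_{L,p}(m+1)$; since the latter is exactly the set of classes in $\text{Cspn}(N)$ by Lemma 3.10 of \cite{graph} applied at level $m+1$, this yields the assertion. Thus the whole problem reduces to showing that the partition of $\mathcal G_{L,p}(m+1)$ into vertex-components of $\mathfrak G_{L,p}(m+1)$ equals its partition into edge-components of $\mathfrak G_{L,p}(m)$.

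First I would prove that the vertex-partition at level $m+1$ refines the edge-partition at level $m$. It suffices to treat two classes $[N_1],[N_2]$ joined by an edge $[P]\in\mathcal G_{L,p}(m+2)$ of $\mathfrak G_{L,p}(m+1)$, say $N_1\simeq\Gamma_{p,1}(P)^{\frac1p}$ and $N_2\simeq\Gamma_{p,2}(P)^{\frac1p}$. Choosing a basis $\{x_1,x_2,x_3\}$ of $P$ as in the definition of $\Gamma_{p,i}$, a direct computation of Gram matrices shows that $\Lambda_p(P)=\z px_1+\z px_2+\z x_3$ and that, as sublattices of the relevant scaled spaces,
$$
\Gamma_{p,2}(N_1)^{\frac1p}=\lambda_p(P)=\Gamma_{p,1}(N_2)^{\frac1p}.
$$
Hence $\lambda_p(P)$ is a common $\Gamma_p$-descendant of $N_1$ and $N_2$, so the edges $[N_1]$ and $[N_2]$ share the vertex $[\lambda_p(P)]$ in $\mathfrak G_{L,p}(m)$ and therefore lie in one edge-component. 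Taking transitive closures, any two classes that are vertex-connected at level $m+1$ are edge-connected at level $m$, which is the desired refinement; in particular the number $V_{m+1}$ of vertex-components at level $m+1$ is at least the number $E_m$ of edge-components at level $m$.

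To upgrade the refinement to an equality it remains to match the two counts. Every vertex $[M]\in\mathcal G_{L,p}(m)$ lies on an edge (built into the definition of the graph in \cite{graph}, and seen directly by adjoining $p^{-1}u$ to $M^{p}$ for an isotropic $u$ in the hyperbolic plane of $M_p$), so each vertex-component of $\mathfrak G_{L,p}(m)$ carries exactly one edge-component and $E_m=V_m$. On the other hand, by Lemma 3.10 of \cite{graph} the number $V_k$ of connected components of $\mathfrak G_{L,p}(k)$ equals $g(\mathcal G_{L,p}(k))$ when $\mathfrak G_{L,p}(k)$ is of $O$-type and $g(\mathcal G_{L,p}(k))/2$ when it is of $E$-type; since both $g(\mathcal G_{L,p}(k))$ and the type depend only on the parity of $k$, the sequence $V_k$ depends only on $k \bmod 2$. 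Combining $V_{m+1}\ge E_m=V_m$ (valid for every $m$) with this $2$-periodicity forces $V_k$ to be constant, whence $V_{m+1}=E_m$. A refinement of finite partitions with equally many parts is an equality; therefore the edge-component of $[N]$ in $\mathfrak G_{L,p}(m)$ equals the vertex-component of $[N]$ in $\mathfrak G_{L,p}(m+1)$, i.e. the set of classes in $\text{Cspn}(N)$.

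The main obstacle is the explicit descendant identity $\Gamma_{p,2}(N_1)^{\frac1p}=\lambda_p(P)=\Gamma_{p,1}(N_2)^{\frac1p}$: one must verify, by a careful local computation at $p$ together with the harmless observation that $\Gamma_p$, $\Lambda_p$, and $\lambda_p$ leave the $q$-adic structure unchanged for $q\ne p$, that two successive $\Gamma_p$-half-descents compose to Watson's $\lambda_p$ and that the two intermediate slot choices produce the same bottom lattice. The remaining delicate points are the no-isolated-vertex claim giving $E_m=V_m$ and the parity bookkeeping that makes $V_k$ depend only on $k\bmod 2$; both are routine given the machinery of \cite{graph}, but they must be stated with care because the ambient space alternates between $V$ and $V^p$ with the parity of the level.
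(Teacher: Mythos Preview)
Your proof is correct and shares the paper's overall strategy: reduce to showing that the edge-component of $[N]$ in $\mathfrak G_{L,p}(m)$ coincides with the vertex-component of $[N]$ in $\mathfrak G_{L,p}(m+1)$, then invoke Lemma~3.10 of \cite{graph}. The forward inclusion (vertex-connected at level $m+1$ $\Rightarrow$ edge-connected at level $m$) is also argued the same way, via the observation that $\lambda_p(P)$ is a common $\Gamma_p$-descendant of the two $\Gamma_p$-descendants of $P$.

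Where you diverge is in the \emph{converse} inclusion. The paper proves it constructively: given two edges $[N],[N']$ sharing a vertex $[M]$ in $\mathfrak G_{L,p}(m)$, it invokes Lemmas~3.2 and~3.3 of \cite{graph} to produce either a single $K\in\mathcal G_{L,p}(m+2)$ whose $\Gamma_p$-descendants are $N$ and $N'$, or (when $m\ge 1$ and $\lambda_p(N)=\lambda_p(N')$) an intermediate $S$ giving a two-step path in $\mathfrak G_{L,p}(m+1)$. You instead run a global counting argument: the refinement gives $V_{m+1}\ge E_m$; absence of isolated vertices gives $E_m=V_m$; and the $2$-periodicity of $V_k$ (via Lemma~3.10 together with the parity dependence of both $g(\mathcal G_{L,p}(k))$ and the $O$/$E$-type) forces $V_{m+1}=E_m$, hence equality of the two partitions. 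This is a clean and legitimate alternative that sidesteps the case analysis; the trade-off is that it is non-constructive and leans on the numerical facts about $g(\mathcal G_{L,p}(k))$ and the type, whereas the paper's argument stays local and explicit, recycling the structural lemmas from \cite{graph}.
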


\begin{proof}  It suffices to show that the set of edges in the connected component of $\mathfrak G_{L,p}(m)$ containing $[N]$
is exactly the set of vertices in the connected component of $\mathfrak G_{L,p}(m+1)$ containing the vertex $[N]$ by Lemma 3.10 of \cite{graph}.  Note that if $N_1$ and $N_2$ are different $\Gamma_p$-descendant of $K$ for some $K \in \mathcal G_{L,p}(m+2)$, then $\lambda_p(K)$ is a $\Gamma_p$-descendant of both $N_1$ and $N_2$. This implies that every class in $\text{Cspn}(N)$ is contained in the set of edges  in the connected component of $\mathfrak G_{L,p}(m)$ containing $[N]$. Conversely, assume that $[N']$ is   contained in the set of edges  in the connected component of $\mathfrak G_{L,p}(m)$ containing $[N]$.  Without loss of generality, we may assume that there is a $\z$-lattice $M$ that is a $\Gamma_p$-descendant of both $N$ and $N'$. If $m=0$ or $m\ge 1$ and $\lambda_p(N) \ne \lambda_p(N')$, then there is a $\z$-lattice $K$ whose $\Gamma_p$-descendants are both $N$ and $N'$ by Lemmas 3.2 and 3.3 of \cite{graph}, that is, as vertices, $[N]$ and $[N']$ are contained in the edge $[K]$. Now suppose that $m\ge1$ and  $\lambda_p(N)= \lambda_p(N')$. Then in this case, there is a $\z$-lattice $S \in \mathcal G_{L,p}(m+1)$  such that $\lambda_p(S)\ne \lambda_p(N)$ and one of $\Gamma_p$-descendants of $S$ is $M$ (see Lemma 3.3 of \cite{graph}). Hence there are edges containing $\{[N], [S]\}$ and $\{[S], [N']\}$  in the graph $\mathfrak G_{L,p}(m+1)$. This completes the proof.       
\end{proof}

\section{genus-correspondences}

Let $n$ be a positive integer.
Let $M$ be a ternary $\mathbb{Z}$-lattice on a quadratic space $V$ and let $N$ be a $\mathbb{Z}$-lattice on $V^n$. Assume that there is a representation 
$$
\phi : M^n \to N  \ \  \text{such that} \ \ [N:\phi(M^n)]=n.
$$ 
Then clearly, $N^n$ is also represented by $M$. For any $\z$-lattice $M_1 \in \gen(M)$, since $(M_1^n)_p \simeq (M^n)_p \ra N_p$ for any prime $p$, there is a $\z$-lattice $N_1 \in \gen(N)$ that represents $(M_1)^n$. Conversely, for any $\z$-lattice $N' \in \gen(N)$, there is a $\z$-lattice $M' \in \gen(M)$ such that $(M')^n \ra N'$ (see \cite{Jagy}). For $M_1 \in \gen(M)$ and $N_1 \in \gen(N)$ such that $(M_1)^n \ra N_1$,  
 the pair $([N_1],[M_1]) \in \gen(N)/\sim\times \gen(M)/\sim$ is called {\it a representable pair by scaling $n$}. A subset $\mathfrak C \subset  \gen(N)/\sim\times \gen(M)/\sim$ consisting of representable pairs by scaling $n$ is called {\it a genus-correspondence} if  
for any $N' \in \gen(N)$, there is an $M' \in \gen(M)$ such that $([N'],[M']) \in \mathfrak C$, and vice versa. We say a genus-correspondence $\mathfrak C$ {\it respects spinor genus} if for any two $([N_1],[M_1]), ([N_2],[M_2]) \in \mathfrak C$,
$$
N_1 \in \spn(N_2) \qquad  \text{if and only if} \qquad  M_1 \in \spn(M_2).
$$ 
 Concerning this, Jagy conjectured in \cite{Jagy} that if $n$ is square free and $g(N)=g(M)$, then any genus-correspondence respects spinor genus. However, the following example shows that the conjecture is not true. 

\begin{exam}\label{exam3} {\rm Let $N_1=\langle 12 \rangle \perp \begin{pmatrix} 15&5\\5&135 \end{pmatrix}$ and $M_1=\langle 1,~20,~80 \rangle$. Then one may easily check that $g(M_1)=g(N_1)=2$,  $dN_1 = 15\cdot dM_1$ and $M_1^{15}$ is represented by $N_1$. The genus of $N_1$ consists of the following $12$ lattices up to isometry:
$$
\begin{array}{llll}
\!\!\!\!\! &N_1=\langle 12,15,135,5,0,0\rangle,\!\!\!\! &N_2=\langle3,7,1200,0,0,1 \rangle,\! \!\!&N_3= \langle 3,60,140,20,0,0 \rangle, \\     
\!\!\!\!\! &N_4=\langle3,27,300,0,0,1 \rangle, \!\! \!\!&N_5=\langle 27,27,40,10,10,3\rangle,\! \!\!&N_6=\langle12,28,83,12,4,-4 \rangle,\\ 
\!\!\!\!\!   &N_7=\langle12,28,75,0,0,4 \rangle, \!\! \!\! &N_8=\langle15,35,48,0,0,5 \rangle, \!\!\! &N_9=\langle7,12,300,0,0,2 \rangle, \\ 
\!\!\! \!\! &N_{10}=\langle12,43,60,20,0,6 \rangle, \!\! \! \! &N_{11}=\langle 8,12,303,4,-2,4 \rangle,\! \!\! &N_{12}=\langle12,35,60,10,0,0 \rangle.
\end{array}
$$
Note that up to isometry,
$$
\text{spn}(N_1)=\{N_i : 1\le i\le6\}\quad \text{and}  \quad \text{spn}(N_7)=\{ N_i : 7\le i\le12\}.
$$ 
 The genus of $M_1$ consists of the following $6$ lattices up to isometry:
$$
\begin{array}{llll}
&\hspace{-5mm}M_1=\langle1,20,80,0,0,0 \rangle, &M_2=\langle5,16,20,0,0,0 \rangle, &M_3=\langle4,20,25,10,0,0 \rangle, \\
&\hspace{-5mm}M_4=\langle4,5,80,0,0,0 \rangle, &M_5=\langle9,9,20,0,0,1 \rangle, &M_6=\langle4,20,21,0,2,0 \rangle.
\end{array}
$$
Note that up to isometry,
$$
\text{spn}(M_1)=\{M_i : 1\le i\le3\} \quad \text{and}\quad  \text{spn}(M_4)=\{ M_i : 4\le i\le6\}.
$$ 
Define a genus-correspondence $\mathfrak{S}$ as follows:
$$
\aligned
\mathfrak{S}=\{&([N_1],[M_1]),([N_9],[M_1]),([N_3],[M_2]),([N_7],[M_2]),\\
&([N_5],[M_3]),([N_{11}],[M_3]),([N_2],[M_4]),([N_8],[M_4]),\\
&([N_6],[M_5]),([N_{10}],[M_5]),([N_4],[M_6]),([N_{12}],[M_6])\}.
\endaligned
$$
Then one may easily check that $\mathfrak{S}$ does not respect spinor genus.}
\end{exam}

In the remaining,  we show that if we take a genus-correspondence suitably, then it respects spinor genus under the assumption that $g(M)=g(N)$. We do not assume that $n$ is square free for a while.

\begin{lem}\label{spn representable}
For ternary $\mathbb{Z}$-lattices $N$ and $M$, assume that $([N],[M])$ is a representable pair by scaling $n$. 
Then for any $N'\in\text{spn}(N)$, there is a $\mathbb{Z}$-lattice $M'\in\text{spn}(M)$ such that $([N'],[M'])$ is a representable pair by scaling $n$. 
Conversely, for any $M''\in\text{spn}(M)$ there is a $\mathbb{Z}$-lattice $N''\in\text{spn}(N)$ such that $([N''],[M''])$ is a representable pair by scaling $n$. 
\end{lem}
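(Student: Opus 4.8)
The plan is to argue adelically, exploiting the one structural fact that scaling the quadratic space changes neither its rotations nor their spinor norms. Write $V$ for the space carrying $M$, so that $N$ lives on $V^n$; identifying $M^n$ with $\phi(M^n)\subseteq N$, I may regard both $M^n$ and $N$ as lattices on the common space $V^n$ with $[N:M^n]=n$. Since $V$ and $V^n$ have the same underlying vector space, $O^+(V)=O^+(V^n)$, and because every rotation is a product of an even number of symmetries, the spinor norm of a rotation computed in $V^n$ differs from the one computed in $V$ only by a factor $n^{2k}\in(\q^\times)^2$. Hence the two spinor-norm maps $\theta$ coincide, and likewise $\theta(O^+((M_p)^n))=\theta(O^+(M_p))$ at every prime $p$. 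This is the linchpin that lets a single split rotation act simultaneously on the $M$-side and the $N$-side.

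First I would transfer split rotations. By $102\!:\!7$ of \cite{om} (as in the proof of Lemma \ref{H-type rmk}), $N'\in\spn(N)$ exactly when there are $\sigma\in O^+(V^n)=O^+(V)$ and a split rotation $\Sigma=(\Sigma_p)\in J_V$ with $\theta(\Sigma_p)\in\theta(O^+(N_p))$ for every $p$ (and $\Sigma_p\in O^+(N_p)$ for almost all $p$) such that $N'_p=\sigma\Sigma_p N_p$. Given such data I set $M':=\sigma\Sigma M$, a well-defined lattice in $\gen(M)$ because $\Sigma$ is a genuine adelic rotation of $V^n=V$. At the primes $p\nmid n$ the embedding $\phi_p$ is an isometry, so $N_p\simeq(M_p)^n$ and $O^+(N_p)=O^+(M_p)$; elsewhere the isometry $\sigma\Sigma_p$ carries $(M_p)^n\subseteq N_p$ onto $(M'_p)^n\subseteq N'_p$ with the same local index. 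Thus $(M')^n$ is represented by $N'$ with index $n$, so $([N'],[M'])$ is a representable pair, and $M'\in\spn(M)$ reduces to checking $\theta(\Sigma_p)\in\theta(O^+(M_p))$ for all $p$. Running the same construction backwards, from a split rotation realizing $M''\in\spn(M)$ I obtain $N''_p=\sigma\Sigma_p N_p$ with $(M'')^n\to N''$ and $N''\in\gen(N)$.

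Everything now reduces to the local comparison
$$
\theta(O^+(N_p))\subseteq\theta(O^+(M_p))\qquad(p\mid n),
$$
with equality for $p\nmid n$ by the previous paragraph. Granting this, the forward direction is immediate, since $\theta(\Sigma_p)\in\theta(O^+(N_p))\subseteq\theta(O^+(M_p))$. For the converse I would use the freedom in the choice of split rotation: replacing $\Sigma_p$ by $\Sigma_p\rho_p$ with $\rho_p\in O^+(M_p)$ (and $\rho_p=\mathrm{id}$ for $p\nmid n$) leaves $M''_p=\sigma\Sigma_p M_p$ unchanged, while $\theta(\Sigma_p\rho_p)$ sweeps the whole coset $\theta(\Sigma_p)\theta(O^+(M_p))=\theta(O^+(M_p))$; since $\theta(O^+(N_p))\subseteq\theta(O^+(M_p))$ is nonempty I may arrange $\theta(\Sigma_p\rho_p)\in\theta(O^+(N_p))$ at each $p\mid n$. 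The adjusted $N''_p=\sigma\Sigma_p\rho_p N_p$ then lies in $\spn(N)$ while still receiving $(M'')^n$, giving the required pair $([N''],[M''])$.

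The main obstacle is the local inclusion $\theta(O^+(N_p))\subseteq\theta(O^+(M_p))$ for $p\mid n$. Because $\theta(O^+(M_p))=\theta(O^+((M_p)^n))$ and $(M_p)^n\subseteq N_p$ is a $p$-power-index overlattice, passing from $N_p$ down to $(M_p)^n$ is a composition of the scale-lowering Watson and $\Gamma_p$ transformations analysed in Section 2; the computation behind Lemma \ref{H-type rmk}, where $|\theta(O^+(\lambda_p(L_p)))|=2^a|\theta(O^+(L_p))|$, shows that such a transformation can only enlarge the local spinor-norm group. I would make this precise by running through the Jordan-splitting cases of Lemmas \ref{odd} and \ref{even} (using \cite{local} at $p=2$) and verifying case by case that the overlattice $N_p$ has spinor-norm group contained in that of $M_p$. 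This classification is the technical heart of the proof; once it is in hand, the adelic transfer above is essentially formal.
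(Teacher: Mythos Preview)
Your plan is far more elaborate than necessary, and its ``technical heart'' rests on a local inclusion that is false. The paper disposes of the lemma in two lines: by the \emph{definition} of spinor genus one may write $N'=\sigma'\Sigma N$ with $\sigma'\in O(V)$ and $\Sigma\in J'_V$, i.e.\ $\theta(\Sigma_p)=1$ at every place. Setting $M'=\sigma'\Sigma\sigma(M)$ (where $\sigma(M^n)\subseteq N$ realises the representable pair) gives $(M')^n=\sigma'\Sigma\sigma(M^n)\subseteq\sigma'\Sigma N=N'$, and $M'=\sigma'\sigma(\sigma^{-1}\Sigma\sigma)M\in\spn(M)$ because $\sigma^{-1}\Sigma\sigma\in J'_V$. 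No comparison of local spinor-norm groups is needed at all; the converse is symmetric.

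You instead chose the characterisation $N'\in\spn(N)\Leftrightarrow\theta(\Sigma_p)\in\theta(O^+(N_p))$, which forces you to prove $\theta(O^+(N_p))\subseteq\theta(O^+(M_p))$ for $p\mid n$. This fails already in the simplest $\Gamma_p$-case. Take $p$ odd and let $M$ be a $\Gamma_p$-descendant of $N$ with $m=0$, so that $M_p\simeq\begin{pmatrix}0&\frac12\\\frac12&0\end{pmatrix}\perp\langle\epsilon\rangle$ is unimodular while $N_p\simeq\begin{pmatrix}0&\frac12\\\frac12&0\end{pmatrix}\perp\langle\epsilon p\rangle$. Then $\theta(O^+(M_p))=\z_p^{\times}(\q_p^{\times})^2$, whereas $\tau_{e_1}\tau_{e_3}\in O^+(N_p)$ has spinor norm $\epsilon p$, so $\theta(O^+(N_p))=\q_p^{\times}(\q_p^{\times})^2$ is strictly larger. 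More generally, for $N_p\simeq H\perp\langle\epsilon p^{m+1}\rangle$ and $M_p\simeq H\perp\langle\epsilon p^{m}\rangle$ the inclusion you want holds when $m$ is odd and fails when $m$ is even; Lemma~\ref{H-type rmk} concerns only the $\lambda_p$-situation and says nothing about this. The remedy is not a finer case analysis but simply to normalise $\theta(\Sigma_p)=1$ from the start---which collapses your argument to the paper's.
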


\begin{proof}
Since $([N],[M])$ is a representable pair by scaling $n$, $\sigma(M^n)\subseteq N$, for some isometry $\sigma\in O(V)$.
Let $N'\in\text{spn}(N)$. Then there are $\sigma'\in O(V)$ and $\Sigma\in J'_V$ such that $N'=\sigma'\Sigma N$.
If we define $M'=\sigma'\Sigma\sigma(M)=\sigma'\sigma(\sigma^{-1}\Sigma\sigma) M \in \text{spn}(M)$, then 
$$
(M')^n= \sigma'\Sigma\sigma (M^n) \subseteq \sigma'\Sigma N=N'.
$$
The converse can be proved similarly.
\end{proof}

A bipartite graph  with partitions $U$ and $V$ of vertices and  with $E$ of edges is   denoted by $\mathfrak G(U,V,E)$, or simply $\mathfrak G(U,V)$. 
For each vertex $u \in U$ of the bipartite graph $\mathfrak G(U,V,E)$,  we define $\mathcal N(u)=\{ v : uv \in E\}$. For a vertex $v \in V$, $\mathcal N(v)$ is defined similarly. 
The graph $\mathfrak G(U,V,E)$ is called {\it $(a,b)$-regular} if $\mathcal N(u)=a$ for any $u \in U$, and $\mathcal N(v)=b$ for any $v \in V$.  

 For two bipartite graphs
$\mathfrak G(U,V,E)$ and $\mathfrak G(V,W,E')$, we define   a {\it juxtaposition bipartite graph} of two bipartite graphs, denoted by $\mathfrak G_V(U,W,\tilde E)$,
as follows;  $U$ and $W$ are partitions of vertices and 
 there is an edge $uw \in \tilde E$ for $u \in U$ and $w \in W$
if and only if there is a vertex $v \in V$ such that $uv \in E$ and $vw \in E'$.     

For a representable pair $([N],[M])$ by scaling $n$, we define a bipartite graph 
$$
\mathfrak{G}(N,M)=\mathfrak G(\gen(N)_S,\gen(M)_S)
$$
 such that 
 two vertices $\text{spn}(N')\in\text{gen}(N)_S$ and $\text{spn}(M')\in\text{gen}(M)_S$ are connected by an edge if and only if there are lattices $N''\in \text{spn}(N')$ and $M''\in\text{spn}(M')$ such that $([N''],[M''])$ is a representable pair by scaling $n$.

\begin{lem}\label{regular}
Let $N$ and $M$ be two $\z$-lattices such that $([N],[M])$ is a representable pair by scaling $n$. Then for some positive integers $u,v$ such that $ug(N)=vg(M)$, the graph $\mathfrak G(N,M)$ is $(u,v)$-regular.  In particular,  if $g(N)=g(M)$, then the graph $\mathfrak{G}(N,M)$ is a regular bipartite graph.
\end{lem}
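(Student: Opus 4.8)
The plan is to exhibit a single group that acts on the bipartite graph $\mathfrak G(N,M)$ by automorphisms and transitively on each of its two parts; biregularity then follows at once, and the relation $u\,g(N)=v\,g(M)$ drops out of counting edges from the two sides.

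First I would observe that the relevant idele group is common to both lattices. Scaling the space by $n$ does not change the orthogonal group, so $O^+(V)=O^+(V^n)$, and since a rotation is a product of an \emph{even} number of symmetries its spinor norm is unaffected by the scaling $V\mapsto V^n$ (the scaling multiplies the norm of each symmetry by $n$, hence a rotation's spinor norm by $n^{2k}\equiv 1 \bmod (\q^\times)^2$). Consequently $J_V=J_{V^n}$, and one idele $\Sigma\in J_V$ acts simultaneously on lattices on $V$ and on $V^n$, hence on $\gen(M)_S$ and $\gen(N)_S$ by $\spn(L')\mapsto \spn(\Sigma L')$. This action is well defined and transitive on each part: well-definedness is the fact, already used in the proof of Lemma \ref{H-type rmk}, that $L'\in\spn(L'')$ if and only if $\Sigma L'\in\spn(\Sigma L'')$; transitivity is the torsor description of the set of spinor genera in a genus under $J_V/(P_D J_V^{\bullet})$ coming from 102:7 of \cite{om}.

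The heart of the argument is that $J_V$ acts by graph automorphisms, and here the construction in Lemma \ref{spn representable} is reused. Suppose $\spn(N')$ and $\spn(M')$ are joined by an edge, witnessed by $N''\in\spn(N')$, $M''\in\spn(M')$ and $\sigma\in O(V)$ with $\sigma((M'')^n)\subseteq N''$ of index $n$. For $\Sigma\in J_V$ I would set $\tilde N=\Sigma N''$ and $\tilde M=\Sigma\sigma(M'')$; applying the local isometries $\Sigma_p$ to the inclusion preserves both the inclusion and its index $n$ at every place, so $(\tilde M)^n=\Sigma\sigma((M'')^n)\subseteq \Sigma N''=\tilde N$ realizes a representable pair by scaling $n$. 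Moreover $\tilde N=\Sigma N''\in\spn(\Sigma N')$ and, since $\sigma(M'')\cong M''$ gives $\sigma(M'')\in\spn(M')$, also $\tilde M=\Sigma\sigma(M'')\in\spn(\Sigma M')$, again by the displayed equivalence. Thus $(\spn(\Sigma N'),\spn(\Sigma M'))$ is an edge, so $\Sigma$ is an automorphism of $\mathfrak G(N,M)$.

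Finally I would assemble the pieces. Because a graph automorphism preserves degrees and $J_V$ acts transitively on $\gen(N)_S$, every vertex of $\gen(N)_S$ has one and the same degree $u$; likewise every vertex of $\gen(M)_S$ has a common degree $v$. Lemma \ref{spn representable} shows no vertex is isolated, so $u,v\ge 1$. Counting the edges of $\mathfrak G(N,M)$ from each side gives $u\,g(N)=|E|=v\,g(M)$, which is the asserted identity; in particular $g(N)=g(M)$ forces $u=v$ and the graph is regular. The only genuinely delicate point is the automorphism step, namely checking that transporting a representable pair by an idele $\Sigma$ again lands in the correct spinor genera while keeping the index equal to $n$; everything else is formal once the common idele group $J_V$ and its transitive action are in place.
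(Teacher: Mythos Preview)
Your proof is correct and follows essentially the same approach as the paper: transport a representable pair by a split rotation $\Sigma\in J_V$ to show that the neighbourhood of $\spn(N')$ maps to the neighbourhood of $\spn(\Sigma N')$, and conclude constancy of degree on each side. The paper carries out the identical transport (writing $\phi^{-1}\Sigma\phi\Sigma^{-1}\in J_V'$ to land in $\spn(\Sigma M')$), while you package the same computation as a transitive action of $J_V$ by graph automorphisms and make explicit the point that $J_V=J_{V^n}$, which the paper uses without comment.
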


\begin{proof}
Let $\text{spn}(N')\in\text{gen}(N)_S$ and $\text{spn}(M')\in\text{gen}(M)_S$ be two vertices the graph $\mathfrak{G}(N,M)$ such that $\text{spn}(M')\in \mathcal{N}(\text{spn}(N'))$. 
By Lemma \ref{spn representable}, we may assume that $(N',M')$ is a representable pair by scaling $n$, that is, there is a representation $\phi \in O(V)$ such that $\phi((M')^n)\subset N'$.  
Let $\text{spn}(N'')$ be another vertex in $\text{gen}(N)_S$. 
Choose a split rotation $\Sigma\in J_V$ such that $N''=\Sigma N'$.  Since 
$$
\phi(\phi^{-1}\Sigma\phi\Sigma^{-1}(\Sigma(M')^n))\subset \Sigma(N'),
$$
$(\Sigma N',\phi^{-1}\Sigma\phi\Sigma^{-1}(\Sigma(M')))$ is a representable pair by scaling $n$.  Furthermore, since $\phi^{-1}\Sigma\phi\Sigma^{-1} \in J_V'$, we have $\text{spn}(\Sigma M') \in \mathcal{N}(\text{spn}(N''))$.
Note that for any two lattices $M',M''\in \text{gen}(M)$, 
$M'\in\text{spn}(M'')$ if and only if $\Sigma M'\in\text{spn}(\Sigma M'')$.
Therefore 
$$
| \mathcal{N}(\text{spn}(N'))|= |\mathcal{N}(\text{spn} (N''))|.
$$ 
Similarly, we also have $| \mathcal{N}(\text{spn}(M'))|= |\mathcal{N}(\text{spn} (M''))|$ for any $M', M'' \in \gen(M)$. The lemma follows from this. \end{proof}

\begin{thm}\label{exist}
Let $N$ and $M$ be two $\z$-lattices such that $([N],[M])$ is a representable pair by scaling $n$. If $g(N)=g(M)$, then there is a genus-correspondence respecting spinor genus.
\end{thm}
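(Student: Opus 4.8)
The plan is to reduce the existence of a spinor-genus-respecting correspondence to the existence of a perfect matching in the bipartite graph $\mathfrak{G}(N,M)=\mathfrak{G}(\gen(N)_S,\gen(M)_S)$ defined above, and then to read off $\mathfrak{C}$ directly from such a matching. The guiding observation is that ``respecting spinor genus'' is precisely the statement that the correspondence descends to a \emph{well-defined bijection} between $\gen(N)_S$ and $\gen(M)_S$, and a bijection compatible with the edge relation of $\mathfrak{G}(N,M)$ is exactly a perfect matching.

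First I would invoke Lemma \ref{regular}: since $g(N)=g(M)$, the graph $\mathfrak{G}(N,M)$ is $(d,d)$-regular for a common positive integer $d$ (the $u=v$ case of that lemma), and since $([N],[M])$ is itself a representable pair it contributes an edge joining $\spn(N)$ to $\spn(M)$, so $d\geq 1$. The two partition classes have equal size $g(N)=g(M)$. A $d$-regular bipartite graph with $d\geq 1$ and equal parts admits a perfect matching by Hall's theorem: for any subset $S$ of one side, counting edges gives $d|S|\leq d|\mathcal N(S)|$, whence $|\mathcal N(S)|\geq |S|$, so a matching saturating one side exists and is automatically perfect. Fix such a matching, described by a bijection $\pi:\gen(N)_S\to\gen(M)_S$ with $\spn(N')$ joined to $\pi(\spn(N'))$ by an edge for every $\spn(N')\in\gen(N)_S$.

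Next I would set
$$
\mathfrak{C} = \{ ([N''],[M'']) : ([N''],[M''])\text{ is representable by scaling } n, \ \pi(\spn(N''))=\spn(M'') \},
$$
the set of all representable pairs whose two spinor genera are matched by $\pi$. To see $\mathfrak{C}$ is a genus-correspondence, take $N'\in\gen(N)$; since $\spn(N')$ is joined to $\pi(\spn(N'))$, there are $\tilde N\in\spn(N')$ and $\tilde M\in\pi(\spn(N'))$ with $([\tilde N],[\tilde M])$ representable, and Lemma \ref{spn representable} applied to this pair and to $N'\in\spn(\tilde N)$ yields an $M'\in\spn(\tilde M)=\pi(\spn(N'))$ with $([N'],[M'])$ representable, hence $([N'],[M'])\in\mathfrak{C}$; the reverse direction follows symmetrically using the converse half of Lemma \ref{spn representable} together with the surjectivity of $\pi$. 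Finally $\mathfrak{C}$ respects spinor genus because $\pi$ is a bijection: for $([N_1],[M_1]),([N_2],[M_2])\in\mathfrak{C}$ one has $\spn(M_i)=\pi(\spn(N_i))$, so
$$
N_1\in\spn(N_2)\Leftrightarrow \spn(N_1)=\spn(N_2)\Leftrightarrow \spn(M_1)=\spn(M_2)\Leftrightarrow M_1\in\spn(M_2).
$$

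The only genuinely non-routine input is the passage from regularity to a perfect matching, which is the classical consequence of Hall's theorem for regular bipartite graphs; everything else is bookkeeping with Lemmas \ref{regular} and \ref{spn representable}. I expect the main subtlety to lie in the correct use of the edge relation of $\mathfrak{G}(N,M)$: an edge only guarantees \emph{some} representable pair inside the two matched spinor genera, so Lemma \ref{spn representable} is essential to upgrade this to a representable pair involving the \emph{prescribed} lattice $N'$ (respectively $M'$), which is what makes $\mathfrak{C}$ a genus-correspondence rather than merely a partial one.
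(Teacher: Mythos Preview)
Your proof is correct and follows essentially the same route as the paper: use Lemma \ref{regular} to get regularity of $\mathfrak{G}(N,M)$, apply Hall's theorem to obtain a perfect matching, and then define the genus-correspondence as all representable pairs whose spinor genera are matched, invoking Lemma \ref{spn representable} to verify the genus-correspondence axioms. Your write-up is in fact a bit more explicit than the paper's (spelling out the Hall condition and the role of Lemma \ref{spn representable}), but the argument is the same.
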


\begin{proof} We may assume that 
$$
\gen(N)_S =\{ \spn(N_i) : i=1,2,\dots,g\}\ \ \text{and}   \ \ \gen(M)_S =\{ \spn(M_i) : i=1,2,\dots,g\}.
$$
Since the graph $\mathfrak{G}(N,M)$ defined above is a regular bipartite graph, there is a perfect matching by Hall's marriage theorem.  Hence, without loss of generality, we may assume that each $([N_i],[M_i])$ is a representable pair by scaling $n$.  
We define a genus-correspondence $\mathfrak{S}$ as follows:
 for $([N'],[M'])\in\text{gen}(N)/\sim\times\text{gen}(M)/\sim$, $([N'],[M'])\in\mathfrak{S}$ if and only if $([N'],[M'])$ is a representable pair by scaling $n$ and there is an $i \ (1\le i\le g)$ such that $N'\in \spn(N_i)$ and $M' \in \spn(M_i)$. 
 Then by Lemma \ref{spn representable}, $\mathfrak{S}$ is a genus-correspondence respecting spinor genus.
\end{proof}

\section{Genus-correspondence respecting spinor genus}

From now on, we assume that $n$ is a square free  positive integer. Let $N$ and $M$ be ternary $\z$-lattices such that the pair  $([N], [M])$ is a representable pair by scaling $n$. 
 In this section, We find a necessary and sufficient condition for the genus-correspondence  $\gen(N)/\sim\times\gen(M)/\sim$ to respect spinor genus under the assumption that $g(M)=g(N)$. 
 
\begin{lem} \label{important}  Let $p$ be a prime and let $N$ and $M$  be primitive ternary $\z$-lattices.  Then $([N],[M])$  is a representable pair by scaling $p$ if and only if 
$M^p \simeq \Lambda_p(N)$  or $M$ is a $\Gamma_p$-descendant of $N$.  
\end{lem}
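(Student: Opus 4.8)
The plan is to convert the statement into a local classification of index-$p$ sublattices of $N$ and then read off which of them are $\Lambda_p(N)$ or a $\Gamma_p$-descendant. First I would record the discriminant dictionary: for any representation $\phi\colon M^{p}\hookrightarrow N$ one has $[N:\phi(M^{p})]^{2}=d(M^{p})/dN=p^{3}\,dM/dN$, so the relation $dN=p\,dM$ accompanying a representable pair by scaling $p$ is equivalent to $[N:\phi(M^{p})]=p$. Hence $([N],[M])$ is a representable pair by scaling $p$ exactly when $N$ has a sublattice $K$ of index $p$ isometric to $M^{p}$; and since $M$ is primitive, such a $K$ has $\n(K)=p\,\n(M)=p\z$, while conversely any index-$p$ sublattice $K\subseteq N$ with $\n(K)=p\z$ gives the primitive lattice $M=K^{1/p}$ and a representable pair. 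Thus it suffices to show that the index-$p$ sublattices of $N$ with norm $p\z$ are precisely $\Lambda_p(N)$ (in the cases where it has index $p$) together with $\Gamma_{p,1}(N)$ and $\Gamma_{p,2}(N)$.

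Next I would localize. An index-$p$ sublattice satisfies $K_q=N_q$ for every $q\neq p$, so only the prime $p$ matters, and any such $K$ contains $pN$; writing $\bar N=N/pN$ and letting $\bar Q$ be the reduction of $Q$, the sublattice $K$ corresponds to the hyperplane $\bar K=K/pN\subseteq\bar N$. Because $x\in K$ forces $Q(x)\equiv\bar Q(\bar x)\ (\mathrm{mod}\ p)$, we have $\n(K)\subseteq p\z$ if and only if $\bar K$ is totally isotropic for the $\f_p$-quadratic form $\bar Q$ on $\bar N$. I would then compute the quadratic radical of $\bar Q$: its associated (polar) form is the reduction of $2B$, and a direct check on a Jordan splitting shows that $\Lambda_p(N)/pN$ is exactly this radical, of codimension equal to the rank $r$ of the unimodular Jordan component of $N_p$ (which is $\ge 1$ by primitivity).

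The classification then follows from the geometry of $\bar Q$. If $r=3$ the form is nondegenerate of rank $3$, whose Witt index is at most $1$, so there is no totally isotropic hyperplane and neither alternative can occur. If $r=1$ the radical is itself a hyperplane, it is the unique totally isotropic hyperplane, and it equals $\Lambda_p(N)/pN$; thus $K=\Lambda_p(N)$ (which here has index $p$) and $M^{p}\simeq\Lambda_p(N)$. If $r=2$, a totally isotropic hyperplane exists if and only if the nondegenerate binary part is isotropic, i.e. $N_p\simeq\left(\begin{smallmatrix}0&\frac12\\\frac12&0\end{smallmatrix}\right)\perp\langle\epsilon p^{m+1}\rangle$, and then there are exactly two of them; by the uniqueness of the two index-$p$ sublattices of norm $p\z$ recorded in Section 3, these are $\Gamma_{p,1}(N)$ and $\Gamma_{p,2}(N)$, so $M$ is a $\Gamma_p$-descendant. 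For the converse direction the inclusions $\Lambda_p(N)\hookrightarrow N$ (when of index $p$) and $\Gamma_{p,i}(N)\hookrightarrow N$ are index-$p$ embeddings of norm $p\z$, which reproduce the representation $M^{p}\to N$.

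The step I expect to be the main obstacle is the prime $p=2$. There the reduction $\bar Q$ is a genuine $\f_2$-quadratic form whose polar form $\overline{2B}$ is alternating, hence automatically degenerate on the odd-dimensional space $\bar N$, so the clean ``nondegenerate part of rank $r$'' description must be replaced by a careful analysis of the quadratic radical and of the Arf-type dichotomy between the split plane $\left(\begin{smallmatrix}0&\frac12\\\frac12&0\end{smallmatrix}\right)$ (two isotropic lines) and the anisotropic even unimodular binary plane (none). I would run the same hyperplane count using the $2$-adic computations underlying Lemmas \ref{odd} and \ref{even} together with the references in \cite{local}, checking case by case on the $2$-adic Jordan splitting that the index-$2$ sublattices of norm $2\z$ are again exactly $\Lambda_2(N)$ (when of index $2$) or the two $\Gamma_{2,i}(N)$. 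The only other point requiring care is to keep the discriminant constraint in force throughout, so that the clause $M^{p}\simeq\Lambda_p(N)$ is invoked only in the configuration $r=1$, where $\Lambda_p(N)$ genuinely has index $p$.
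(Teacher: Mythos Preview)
Your reduction to classifying totally isotropic hyperplanes in $N/pN$ is correct and conceptually illuminating, but the paper takes a shorter route that also avoids the $p=2$ case split you anticipate. Rather than working in $\bar N$, the paper fixes a basis $\{x_1,x_2,x_3\}$ of $N$ with $M^p=\z x_1+\z x_2+\z(px_3)$ and writes out the Gram matrix; the condition $\n(M^p)=p\z$ forces $Q(x_1),Q(x_2),2B(x_1,x_2)\in p\z$, so only $s=2B(x_2,x_3)$ and $t=2B(x_1,x_3)$ remain free modulo $p$. If $s\equiv t\equiv 0\pmod p$ one reads off $M^p=\Lambda_p(N)$ directly from the defining congruence. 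If, say, $t\not\equiv 0\pmod p$, the binary sublattice $\z_p x_1+\z_p x_3$ is $\tfrac12\z_p$-modular with determinant $pac-t^{2}/4\equiv -t^{2}/4\pmod{p\z_p}$, which lies in the square class of $-\tfrac14$; hence it is the hyperbolic plane, $N_p\simeq\bigl(\begin{smallmatrix}0&1/2\\1/2&0\end{smallmatrix}\bigr)\perp\langle p^{m+1}\epsilon\rangle$, and the Section~3 uniqueness statement identifies $M^p$ as one of the two $\Gamma_{p,i}(N)$. That determinant step works verbatim at $p=2$ (since $t$ odd gives $t^{2}\equiv 1\pmod 8$, so the determinant is $-\tfrac14$ times a unit $\equiv 1\pmod 8$), so no separate Arf-type analysis is needed. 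Your hyperplane approach has the merit of explaining the $0/1/2$ count structurally; the paper's adapted-basis argument trades that insight for brevity and for handling all primes uniformly.
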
  

\begin{proof} Without loss of generality, we may assume that $M^p$ is a sublattice of $N$ with index $p$. Then there is  a basis $\{x_1,x_2,x_3\}$ for $N$ such that $M^p=\z x_1+\z x_2+\z (px_3)$. Hence there are integers $a,b,c,s,t,u$ such that  
$$
(B(x_i,x_j))=\begin{pmatrix}pa&\frac{pu}2&\frac t2\\\frac{pu}2&pb&\frac s2\\\frac t2&\frac s2&c\end{pmatrix}.
$$   
Hence if $s\equiv t\equiv 0\pmod p$, then clearly, $M^p=\Lambda_p(N)$. If $s$ or $t$ is not divisible by $p$, then a Jordan decomposition of $N_p$ has an isotropic $\frac 12\z_p$-modular component.  Furthermore, $M^p$ is a sublattice of $N$ with index $p$ whose norm is $p\z$. Therefore $M$ is a $\Gamma_p$-descendant of $N$.    Note that the converse is almost trivial. \end{proof}

\begin{lem}  \label{step-r} For two ternary $\z$-lattices $N$ and $M$, assume that $([N],[M])$  is a representable pair by scaling $n$. For any prime $p$ dividing $n$, there is a $\z$-lattice $N(p)$ such that $([N],[N(p)])$ is a representable pair by scaling $p$, and $([N(p)],[M])$ is a representable pair by scaling $\frac np$. 
\end{lem}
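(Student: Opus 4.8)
The plan is to split off the prime $p$ from the scaling by inserting an intermediate lattice between $M$ and $N$, and to check its two representability properties one prime at a time. First I would use the representation defining the pair $([N],[M])$ to reduce to the case where $M^n$ is an honest sublattice of $N$ of index $n$; after absorbing the representing isometry $\phi\in O(V)$ into $M$, the underlying $\z$-modules satisfy $M\subseteq N$ in $V$, with $N$ carrying the form of $V^n$ and $M$ the form of $V$, and $[N:M]=n$. Write $n=pm$ with $m=n/p$. Since $n$ is square free we have $\gcd(p,m)=1$, so the $p$-part of the index $[N:M]=n$ is exactly $p$, while the $q$-parts for $q\mid m$ account for $m$ and all remaining localizations coincide. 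This exact splitting of the index is precisely where the square-freeness hypothesis is used.

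Next I would construct $N(p)$ by prescribing its localizations. Let $L$ be the unique $\z$-lattice on $V$ with
$$
L_p=M_p \qquad\text{and}\qquad L_q=N_q \ \text{ for all primes } q\neq p.
$$
Such an $L$ exists and satisfies $M\subseteq L\subseteq N$, because $M_q=N_q$ already holds for every $q\nmid n$, so the prescribed data agree with $N$ at all but the single prime $p$; concretely $L=N\cap M_p$, the intersection taken inside $V$. Computing indices localization by localization gives $[N:L]=p$ and $[L:M]=m$. I then define $N(p)$ to be the module $L$ equipped with the form of $V^m$.

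Finally I would verify the two assertions. For $([N(p)],[M])$ by scaling $m$: the lattices $M^m$ and $N(p)$ both carry the form of $V^m$ and $M\subseteq L$ with index $m$, which is exactly a representation $M^m\to N(p)$ of index $m$. For $([N],[N(p)])$ by scaling $p$: the lattice $N(p)^p$ has the same module $L$ but now carries the form of $V^n$, so $N(p)^p\subseteq N$ with index $[N:L]=p$, a representation $N(p)^p\to N$ of index $p$; conceptually this is the statement that, up to the primitive normalization, $N(p)$ is either $\lambda_p(N)$ (when $N(p)^p\simeq\Lambda_p(N)$) or a $\Gamma_p$-descendant of $N$, in accordance with Lemma~\ref{important}. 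The only step that is not purely formal is confirming that $N(p)$ is a genuine non-classic integral $\z$-lattice once the form of $V^n$ on $L$ has been divided by $p$ to obtain the form of $V^m$; I would check this on localizations, the essential point being that at $p$ one has $L_p=M_p$, which is already $B$-integral, so multiplying its form by the $p$-adic unit $m$ keeps $\mathfrak n(N(p)_p)\subseteq\z_p$, while at every $q\neq p$ the passage from $V^n$ to $V^m$ only rescales by the $q$-adic unit $p^{-1}$ and hence preserves integrality. I expect this integrality bookkeeping, together with the correct tracking of the three scalings $V$, $V^m$, $V^n$, to be the main (though routine) obstacle.
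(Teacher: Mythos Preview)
Your proposal is correct and is essentially the paper's own argument in different clothing. The paper chooses a Smith-normal-form basis $\{x_1,x_2,x_3\}$ of $N$ with $M^n=\z x_1+\z x_2+\z(nx_3)$ and sets $N(p)=(\z x_1+\z x_2+\z(px_3))^{1/p}$; your lattice $L$ defined by $L_p=M_p$ and $L_q=N_q$ for $q\neq p$ is exactly this $\z x_1+\z x_2+\z(px_3)$, and equipping it with the form of $V^m$ is the same as scaling by $1/p$. The only cosmetic difference is that the paper works with an explicit adapted basis while you work locally, but the intermediate lattice and the verification are identical.
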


\begin{proof} By assumption, we may assume that $M^n$ is a sublattice of $N$ with index $n$. Choose a basis $\{x_1,x_2,x_3\}$ for $N$ such that $M^n=\z x_1+\z x_2+\z (nx_3)$. Define $N(p)=(\z x_1+\z x_2+\z (px_3))^{\frac1p}$. Then one may easily show that $\mathfrak n(N(p))=\z$ and $N(p)$ satisfies all conditions given above. \end{proof}

\begin{cor}  \label{step-r2} Let $a,b$ be positive integers such that $ab$ is square free. Let $([N],[L])$ and $([L],[M])$ be representable pairs of ternary $\z$-lattices by scaling  $a$ and $b$, respectively. Then the graph $\mathfrak G(N,M)$ is  exactly same to the juxtaposition bipartite graph $\mathfrak G_{\gen(L)_S}(\gen(N)_S,\gen(M)_S)$. 
\end{cor}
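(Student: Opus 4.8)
The plan is to show the two bipartite graphs coincide by comparing edges, since both have the same vertex partition $\gen(N)_S$ and $\gen(M)_S$. Two facts will be used repeatedly: that $a$ and $b$ are coprime (because $ab$ is square free), and that representable pairs by coprime scalings \emph{compose}. Concretely, if $([P],[Q])$ is a representable pair by scaling $a$ and $([Q],[R])$ is a representable pair by scaling $b$, then scaling the index-$b$ representation $R^b\hookrightarrow Q$ by $a$ and inserting it under the index-$a$ representation $Q^a\hookrightarrow P$ yields $R^{ab}\hookrightarrow P$ of index $ab$; hence $([P],[R])$ is a representable pair by scaling $ab$, and the discriminant relation $dP=ab\cdot dR$ is automatic.

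First I would treat the implication ``an edge of the juxtaposition graph is an edge of $\mathfrak G(N,M)$''. Suppose $\spn(N')$ and $\spn(M')$ are joined through a vertex $\spn(L')\in\gen(L)_S$. By definition there are $N_1\in\spn(N')$, $L_1\in\spn(L')$ with $([N_1],[L_1])$ representable by scaling $a$, and $L_2\in\spn(L')$, $M_2\in\spn(M')$ with $([L_2],[M_2])$ representable by scaling $b$. Since $L_1,L_2$ lie in the same spinor genus, Lemma \ref{spn representable} applied to $([N_1],[L_1])$ produces $N_3\in\spn(N_1)$ with $([N_3],[L_2])$ representable by scaling $a$. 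Composing $([N_3],[L_2])$ with $([L_2],[M_2])$ as above gives a representable pair $([N_3],[M_2])$ by scaling $ab$ with $N_3\in\spn(N')$ and $M_2\in\spn(M')$, which is exactly the edge $\spn(N')\,\spn(M')$ of $\mathfrak G(N,M)$.

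For the converse I would begin from an edge $\spn(N')\,\spn(M')$ of $\mathfrak G(N,M)$, realized by a representable pair $([N''],[M''])$ by scaling $ab$ with $N''\in\spn(N')$ and $M''\in\spn(M')$. Writing $a=p_1\cdots p_k$ and applying Lemma \ref{step-r} successively at $p_1,\dots,p_k$, I peel off the factor $a$ to obtain a primitive lattice $L''$ for which $([N''],[L''])$ is representable by scaling $a$ (composing the prime steps) and $([L''],[M''])$ is representable by scaling $b$. It then remains only to verify that $L''\in\gen(L)$, for then $\spn(L'')$ is the intermediate vertex joining $\spn(N')$ and $\spn(M')$ in the juxtaposition graph.

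The crux of the argument, and the step I expect to be the main obstacle, is this genus-rigidity of the scaling-$a$ correspondence: whenever $N''\in\gen(N)$ and $([N''],[L''])$ is a representable pair by scaling $a$, one must conclude $L''\in\gen(L)$. Since $N''\in\gen(N)$ we have $N''_p\simeq N_p$ for every prime $p$, so it suffices to show the localization $L''_p$ is forced by $N''_p$. For $p\nmid a$ this is immediate, as scaling by $a$ acts as a $p$-adic unit rescaling. For $p\mid a$ (where $p$ exactly divides $a$), Lemma \ref{important} says the scaling-$p$ partner is, at $p$, either the $\lambda_p$-image of $N''$ or a $\Gamma_p$-descendant; the two $\Gamma_p$-descendants share a common localization at $p$ (they are the two endpoints of a single edge in the graph of Section 3, hence lie in one genus), so in that case $L''_p$ is again determined. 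Thus $L''_p\simeq L_p$ for all $p$, giving $L''\in\gen(L)$. The delicate point is precisely that the $\Lambda_p$- and $\Gamma_p$-alternatives are mutually exclusive for a fixed local lattice: a Jordan splitting of $N''_p$ admits an isotropic $\tfrac12\z_p$-modular component (forcing the $\Gamma_p$ case, since then $\Lambda_p(N'')$ has index $p^2$) or it does not (forcing the $\Lambda_p$ case). This dichotomy, together with the fact that the $\Gamma_p$-descendants do not split across two genera, is what I would verify from the explicit local structure recorded in Section 3, and it is the heart of the proof.
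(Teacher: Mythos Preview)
Your argument is correct and follows the paper's approach: both directions hinge on Lemma~\ref{step-r} (to interpolate a lattice $L''$ between $N''$ and $M''$) together with composition of coprime representable pairs and Lemma~\ref{spn representable}. The paper's own proof is a single sentence invoking Lemma~\ref{step-r}; your explicit verification that the interpolated lattice lies in $\gen(L)$ via the $\Lambda_p$/$\Gamma_p$ dichotomy of Lemma~\ref{important} spells out what the paper leaves to the reader.
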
 

\begin{proof}  It suffices to show that each set of edges for both graphs is same, which follows directly  from the above lemma. 
\end{proof}

Let $([N],[M])$ be a representable pair of ternary $\z$-lattices by scaling $p$, where $p$ is a prime. Then $\lambda_p(N) \simeq M$ or $M$ is a $\Gamma_p$-descendent of $N$ by Lemma \ref{important}. If the former holds, then the bipartite graph $\mathfrak G(N,M)$ is $(1,1)$-regular or $(1,2)$-regular by  Lemma \ref{H-type rmk}.  Furthermore, it is $(1,1)$-regular if and only if $N$ is of $H$-type at $p$. Note that $(1,4)$-regularity is impossible in our situation. 

Now, assume that the latter holds.  Then there is a $\z$-lattice $L$ and a nonnegative integer $m$ such that $N \in \mathcal G_{L,p}(m+1)$ and $M \in \mathcal G_{L,p}(m)$.   
If the graph $\mathfrak G_{L,p}(m+1)$ is of $E$-type, then the bipartite graph $\mathfrak G(N,M)$ is $(1,2)$-regular by Lemma \ref{coco}, and if  the graph $\mathfrak G_{L,p}(m)$ is of $E$-type, then the bipartite graph $\mathfrak G(N,M)$ is $(2,1)$-regular. Finally, if both $\mathfrak G_{L,p}(m+1)$ and $\mathfrak G_{L,p}(m)$ are of $O$-type, then the bipartite graph $\mathfrak G(N,M)$ is $(1,1)$-regular. Note that  both $\mathfrak G_{L,p}(m+1)$ and $\mathfrak G_{L,p}(m)$ cannot be of $E$-type  simultaneously.  We say  $N$ is of $(E,O)$-type ($(O,E)$-type) at $p$ if  the graph $\mathfrak G_{L,p}(m+1)$ ($\mathfrak G_{L,p}(m)$, respectively) is of $E$-type.  Finally, we say $N$ is of $(O,O)$-type if both  $\mathfrak G_{L,p}(m+1)$ and $\mathfrak G_{L,p}(m)$ are of $O$-type.

Let $([N],[M])$ be a representable pair of ternary $\z$-lattices by scaling $n$, where $n$ is a square free positive integer.
Without loss of generality, we assume that $M^n$ is a sublattice of $N$ with index $n$.  Choose a basis $\{x_1,x_2,x_3\}$ for $N$ such that $M^n=\z x_1+\z x_2+\z (nx_3)$. Let $n_2$ be a product of primes $p$ dividing $n$ such that  the rank of a $\frac12\z_p$-modular component in a Jordan decomposition of $N_p$ is two, and let $n_1$ be the integer satisfying $n=n_1n_2$.  Let $n_2(e)$ be a product of primes $q$ dividing $n_2$ such that $\ord_q(4\cdot dM) \equiv 0 \pmod 2$, and let $n_2(o)$  be the integer satisfying $n_2=n_2(e)n_2(o)$.  Define a ternary $\z$-lattice
$$
L_{N,M}=(\z x_1+\z x_2+\z (n_1n_2(e)x_3))^{\frac1{n_1n_2(e)}}.
$$
 Note that pair $([N],[L_{N.M}])$ ($([L_{N,M}],[M])$) is a representable pair by scaling $n_1n_2(e)$  ($n_2(o)$, respectively).   
Let $g_{N,M}=g(L_{N,M})$ be the number of (proper) spinor genera in the genus of $L_{N,M}$. 

\begin{thm}  \label{resol} Let $n$ be a square free positive integer and let $([N],[M])$ be a representable pair by  scaling $n$. Then, any connected component of the bipartite graph $\mathfrak G(N,M)$ is a complete $K_{\alpha,\beta}$-graph, where 
$$
\alpha=\frac{g(M)}{g_{N,M}} \qquad \text{and} \qquad \beta=\frac{g(N)}{g_{N,M}}.
$$   
\end{thm}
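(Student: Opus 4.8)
The plan is to prove the theorem by threading the bipartite graph $\mathfrak{G}(N,M)$ through the distinguished intermediate lattice $L_{N,M}$ and then reducing everything to one prime at a time. By construction the pair $([N],[L_{N,M}])$ is representable by scaling $n_1n_2(e)$ and $([L_{N,M}],[M])$ by scaling $n_2(o)$, and since $n=(n_1n_2(e))\cdot n_2(o)$ is a factorization into coprime parts, Corollary~\ref{step-r2} identifies $\mathfrak{G}(N,M)$ with the juxtaposition $\mathfrak{G}_{\gen(L_{N,M})_S}(\gen(N)_S,\gen(M)_S)$. So I would first reduce to understanding the two legs $\mathfrak{G}(N,L_{N,M})$ and $\mathfrak{G}(L_{N,M},M)$ separately. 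The whole point of $L_{N,M}$ is its defining feature, built into the split $n_2=n_2(e)n_2(o)$ through the parity of $\ord_q(4\,dM)$: at every rank-two prime it lies at a level of fixed parity in the tower $\mathcal{G}_{L,q}(\cdot)$, namely the level at which the $E$-type merging of spinor genera has already been absorbed. This is what I expect to force the number of connected components of $\mathfrak{G}(N,M)$ to equal $g_{N,M}=g(L_{N,M})$.

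Next I would establish the single-prime building blocks, which are already essentially contained in the regularity discussion preceding the statement. If $([A],[B])$ is representable by scaling a prime $p$, then by Lemma~\ref{important} either $B^p\simeq\Lambda_p(A)$ or $B$ is a $\Gamma_p$-descendant of $A$; in the former case Lemma~\ref{H-type rmk} gives that $\mathfrak{G}(A,B)$ is $(1,1)$- or $(1,2)$-regular, and in the latter Lemma~\ref{coco} together with the $E/O$-type dichotomy gives $(1,1)$-, $(1,2)$-, or $(2,1)$-regularity. In every case one side has degree $1$, so each connected component of a single-prime graph is automatically a complete bipartite star, $K_{1,1}$, $K_{1,2}$, or $K_{2,1}$. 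This furnishes the base case.

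The inductive step — and the combinatorial heart — is to show that juxtaposing such star-graphs over the coprime primes dividing $n$ produces again a disjoint union of complete bipartite graphs, with the two part-sizes multiplying prime by prime. The essential input is Lemma~\ref{exchange}: $\lambda_q$- and $\Gamma_p$-transformations at distinct primes commute, so the local star-structures are independent and a component of the composite is controlled coordinatewise by the components of the factors. Tracking, for each component, the product $\beta$ of the $N$-side multiplicities and the product $\alpha$ of the $M$-side multiplicities, regularity of $\mathfrak{G}(N,M)$ from Lemma~\ref{regular} forces every component to have $N$-degree $\alpha$ and $M$-degree $\beta$; counting vertices then yields $\alpha=g(M)/g_{N,M}$, $\beta=g(N)/g_{N,M}$, with $g_{N,M}$ the common number of components.

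The hard part will be two things. First, completeness: a connected regular bipartite graph need not be complete (an even cycle of length at least six is a counterexample), so the complete-bipartiteness of each component must be extracted from the arithmetic rather than from regularity alone. I expect to need the idele-class description $g(\cdot)=[J_\q:P_DJ_\q^{(\cdot)}]$ from 102:7 of \cite{om}, showing that the ``single direct edge'' relation between two spinor genera already coincides with membership in a common coset of the subgroup generated by the relevant spinor-norm groups together with the elements $\mathbf{j}(p)$ for $p\mid n$; once adjacency is seen to be this coset relation it is transitive, and connectivity upgrades to completeness. Second, pinning the number of components to $g(L_{N,M})$ is exactly where the fixed-parity property of $L_{N,M}$ must be used: one has to verify, via the parities recorded by $n_2(e)$ and $n_2(o)$, that descending to $L_{N,M}$ removes precisely the $E$-type ambiguities, so that the components of $\mathfrak{G}(N,M)$ biject with $\gen(L_{N,M})_S$. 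Carrying out this bijection and the completeness together is, I anticipate, the crux of the proof.
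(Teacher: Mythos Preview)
Your proposal has the right skeleton --- route through $L_{N,M}$ via Corollary~\ref{step-r2}, analyze single-prime steps via Lemmas~\ref{important}, \ref{H-type rmk}, and \ref{coco} --- but it misses the one observation that makes the paper's proof immediate and dissolves both of your anticipated ``hard parts''.

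The paper does not run a general induction over all the primes of $n$, nor does it invoke the idele-class description to force completeness. Instead, it observes that the parity split $n_2=n_2(e)n_2(o)$ is chosen precisely so that every single-prime factor $\mathfrak G(N(i-1),N(i))$ on the leg from $N$ to $L_{N,M}$ is $(1,1)$- or $(1,2)$-regular, while every single-prime factor on the leg from $L_{N,M}$ to $M$ is $(1,1)$- or $(2,1)$-regular. Juxtaposing $(1,*)$-regular graphs yields a $(1,*)$-regular graph, so $\mathfrak{G}(N,L_{N,M})$ is $(1,\,g(N)/g_{N,M})$-regular and $\mathfrak{G}(L_{N,M},M)$ is $(g(M)/g_{N,M},\,1)$-regular: each $\spn(N')$ is joined to a \emph{unique} $\spn(L')\in\gen(L_{N,M})_S$, and likewise each $\spn(M')$. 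The connected components of $\mathfrak{G}(N,M)$ are then literally the fibres over $\gen(L_{N,M})_S$, and each such fibre is by construction a complete $K_{\alpha,\beta}$. No further argument is needed.

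Your worry that a connected regular bipartite graph need not be complete bipartite is entirely legitimate for a \emph{generic} juxtaposition of stars --- a $(2,1)$-step followed by a $(1,2)$-step can indeed produce long cycles --- but the whole purpose of $L_{N,M}$ is to segregate the $(1,2)$-type prime steps onto one leg and the $(2,1)$-type prime steps onto the other, so that each leg is an honest function on one side. You gesture at this when you say ``the $E$-type merging of spinor genera has already been absorbed'', but you stop short of drawing the consequence that each leg has degree~$1$ on the outer side, which is exactly what makes completeness and the component count automatic. Your alternative route through $[J_\q:P_DJ_\q^{(\cdot)}]$ and Lemma~\ref{exchange} could be made to work, but it is the long way around.
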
 

\begin{proof}   Without loss of generality, we assume that $M^n$ is a sublattice of $N$ with index $n$.
 Let $n_1n_2(e)=p_1p_2\dots p_s$ and $n_2(o)=q_1q_2\dots q_t$, where each $p_i$ and $q_j$ is a prime.    By Lemma \ref{step-r2}, the graph $\mathfrak G(N,M)$ is a juxtaposition of the graphs $\mathfrak G(N,L_{N,M})$ and $\mathfrak G(L_{N,M},M)$, where $L_{N,M}$ is a $\z$-lattice defined above.  Let $\{x_1,x_2,x_3\}$ be a basis for $N$ such that 
$L_{N,M}^{n_1n_2(e)}=\z x_1+\z x_2+\z (n_1n_2(e)x_3)$. Define 
$$
N(i)=(\z x_1+\z x_2+\z (p_1p_2\dots p_ix_3))^{\frac1{p_1p_2\dots p_i}}.
$$ 
Then the graph $\mathfrak G(N,L_{N,M})$ is a juxtaposition of the graphs 
$$
\mathfrak G(N,N(1)), \mathfrak G(N(1),N(2)), \dots, \mathfrak G(N(s-1),L_{N,M}),
$$
all of which are either a $(1,1)$-regular graph or a $(1,2)$-regular graph. Therefore the graph $\mathfrak G(N,L_{N,M})$ is a $\left(1,\frac {g(N)}{g_{N,M}}\right)$-regular graph.  Similarly, one may easily check that the graph $\mathfrak G(L_{N,M},M)$ is a $\left(\frac{g(M)}{g_{N,M}},1\right)$-regular graph.  The theorem follows from these two observations. 
\end{proof}

\begin{cor}  \label{resol2} Let $n$ be a square free positive integer and let $([N],[M])$ be a representable pair by  scaling $n$. Assume that $g(N)=g(M)$. Then $g(N)=g_{N,M}$ if and only if the genus-correspondence  $\gen(N)/\sim\times\gen(M)/\sim$ respects spinor genus. 
\end{cor}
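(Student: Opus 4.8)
The plan is to read off both directions of the equivalence directly from the structure theorem for $\mathfrak G(N,M)$ established in Theorem \ref{resol}. The key observation is that the genus-correspondence $\gen(N)/\sim\times\gen(M)/\sim$ respects spinor genus precisely when the edges of $\mathfrak G(N,M)$ never cross between distinct spinor classes in an incompatible way, and the connected-component description gives complete control over this. By Theorem \ref{resol}, every connected component of $\mathfrak G(N,M)$ is a complete bipartite graph $K_{\alpha,\beta}$ with $\alpha=g(M)/g_{N,M}$ and $\beta=g(N)/g_{N,M}$. Under the standing hypothesis $g(N)=g(M)=:g$, this forces $\alpha=\beta=g/g_{N,M}$, so each component is a balanced complete bipartite graph on $g/g_{N,M}$ vertices on each side.

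First I would establish the easy direction. Suppose $g(N)=g_{N,M}$. Then $\alpha=\beta=1$, so every connected component of $\mathfrak G(N,M)$ is a single edge $K_{1,1}$, i.e. $\mathfrak G(N,M)$ is itself a perfect matching between $\gen(N)_S$ and $\gen(M)_S$. In that situation the adjacency relation defines a bijection $\spn(N')\mapsto\spn(M')$, and for any two representable pairs $([N_1],[M_1])$ and $([N_2],[M_2])$ in the correspondence, the spinor genus of $M_i$ is completely determined by that of $N_i$; hence $N_1\in\spn(N_2)$ holds if and only if $\spn(N_1)=\spn(N_2)$, if and only if these correspond to the same matched vertex $\spn(M_1)=\spn(M_2)$, if and only if $M_1\in\spn(M_2)$. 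Thus the full correspondence respects spinor genus.

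Next I would prove the contrapositive of the hard direction: if $g(N)\ne g_{N,M}$, then $\gen(N)/\sim\times\gen(M)/\sim$ does not respect spinor genus. Here $g_{N,M}$ properly divides $g$, so $\alpha=\beta=g/g_{N,M}\ge 2$, and each connected component is a complete bipartite graph $K_{\alpha,\beta}$ with at least two vertices on each side. Pick one such component; it contains two distinct spinor genera $\spn(N_1)\ne\spn(N_2)$ on the $N$-side and two distinct spinor genera $\spn(M_1)\ne\spn(M_2)$ on the $M$-side, and because the component is \emph{complete}, all four edges $\spn(N_i)\,\spn(M_j)$ are present. By the definition of the edges of $\mathfrak G(N,M)$ and by Lemma \ref{spn representable}, each such edge is realized by an actual representable pair by scaling $n$ lying inside the correspondence. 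Then $([N_1],[M_1])$ and $([N_1],[M_2])$ are both in the correspondence with $N_1\in\spn(N_1)$ but $M_1\notin\spn(M_2)$, which violates the defining condition \eqref{respect} for respecting spinor genus.

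The only subtle point — and the step I expect to need the most care — is the translation between the combinatorial edges of $\mathfrak G(N,M)$ and genuine representable pairs inside $\gen(N)/\sim\times\gen(M)/\sim$. An edge of $\mathfrak G(N,M)$ asserts only that \emph{some} representatives $N''\in\spn(N')$ and $M''\in\spn(M')$ form a representable pair by scaling $n$; I must make sure such representatives may be chosen as actual lattices realizing the spinor genera in question, which is exactly what Lemma \ref{spn representable} provides (together with the symmetric statement for the $M$-side). Once that identification is secure, both directions follow formally from the $K_{\alpha,\beta}$ description, and the corollary is immediate.
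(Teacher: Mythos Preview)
Your proposal is correct and follows essentially the same route as the paper: the paper's proof simply observes that the full genus-correspondence respects spinor genus if and only if $\mathfrak G(N,M)$ is $(1,1)$-regular, and then invokes Theorem~\ref{resol}. Your two directions are just this equivalence spelled out, and the ``subtle point'' you flag is already built into the definition of the edges of $\mathfrak G(N,M)$, so Lemma~\ref{spn representable} is not strictly needed here (though it does let you take the same literal $N_1$ in both pairs, which is cosmetically nicer).
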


\begin{proof}
Note that the genus-correspondence  
$\gen(N)/\sim\times\gen(M)/\sim$ respects spinor genus if and only if the graph $\mathfrak G(N,M)$ is $(1,1)$-regular.  Hence the corollary follows directly from the above theorem. 
\end{proof}

Recall that we are assuming that $n$ is a square free positive integer and $([N],[M])$ is a representable pair by scaling $n$. Now we further assume that $g(N)=g_{N,M}=g(M)$, that is,  the genus-correspondence  $\gen(N)/\sim\times\gen(M)/\sim$ respects spinor genus by Theorem \ref{resol}.  Assume that
$$
\gen(N)_S=\{\spn(N_i):i=1,2,\dots,g\} \ \ \text{and}\ \ \gen(M)_S=\{\spn(M_i):i=1,2,\dots,g\},
$$  
 and there is a unique edge containing $\spn(N_i)$ and $\spn(M_i)$ in the graph  $\mathfrak{G}(N,M)$ for any $i=1,2,\dots,g$.  

\begin{lem}  Let $k$ be a positive integer.
Under the assumptions given above, $\spn(N_i)$ represents $nk$ if and only if $\spn(M_i)$ represents $k$, for any $i=1,2,\dots,g$. 
\end{lem}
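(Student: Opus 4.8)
The plan is to separate the two implications and to reduce the general square-free case to the case $n=p$ prime. Since $g(N)=g_{N,M}=g(M)$, Theorem \ref{resol} shows that $\mathfrak G(N,M)$ is $(1,1)$-regular, and because $\mathfrak G(N,M)$ is the juxtaposition of the single-prime steps produced by Lemma \ref{step-r} and Corollary \ref{step-r2}, every such step is forced to be $(1,1)$-regular as well; in particular each intermediate lattice $N(p)$ satisfies $g(N(p))=g(N)=g(M)$ and carries a unique spinor-genus pairing. Writing $n=p_1p_2\cdots p_r$ and letting $N(p_1)$ be the intermediate lattice from Lemma \ref{step-r}, I would relate representation of $nk$ by $\spn(N_i)$ to representation of $(n/p_1)k$ by the paired spinor genus of $N(p_1)$ using the prime case for $p_1$, and finish by induction on $r$ applied to the representable pair $([N(p_1)],[M])$ by scaling $n/p_1$.

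For the prime case $n=p$ the implication ``$\spn(M_i)$ represents $k\Rightarrow\spn(N_i)$ represents $pk$'' is immediate from Lemma \ref{spn representable}: if $M'\in\spn(M_i)$ represents $k$ through a vector $v$, choose $N'\in\spn(N_i)$ with $(M')^p\subseteq N'$; then $Q_{N'}(v)=Q_{(M')^p}(v)=p\,Q_{M'}(v)=pk$, so $\spn(N_i)$ represents $pk$.

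The substance is the converse. Suppose $N'\in\spn(N_i)$ represents $pk$ through $w\in N'$ with $Q(w)=pk$, and recall from Lemma \ref{important} that $([N],[M])$ is either a $\lambda_p$-pair or a $\Gamma_p$-pair. In the $\lambda_p$-case the index-$p$ hypothesis forces the unimodular Jordan component of $N'_p$ to be one-dimensional, so for odd $p$ one may take $N'_p\simeq\langle1,p\e_1,p^b\e_2\rangle$; then $p\mid Q(w)$ forces the unimodular coordinate of $w$ to be divisible by $p$, whence $w\in\Lambda_p(N')$ and $Q_{\lambda_p(N')}(w)=Q(w)/p=k$. In the $\Gamma_p$-case, where $N'_p\simeq\left(\begin{smallmatrix}0&\frac12\\\frac12&0\end{smallmatrix}\right)\perp\langle\e p^{m+1}\rangle$, the congruence $p\mid Q(w)$ forces $w$ into one of the two sublattices $\Gamma_{p,1}(N'),\Gamma_{p,2}(N')$, and the associated descendant $M''=\Gamma_{p,j}(N')^{1/p}$ satisfies $Q_{M''}(w)=Q(w)/p=k$. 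In either case the descended lattice forms a representable pair by scaling $p$ with $N'$, so by the $(1,1)$-regularity of $\mathfrak G(N,M)$ it lies in the unique neighbor $\spn(M_i)$ of $\spn(N_i)$; hence $\spn(M_i)$ represents $k$.

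The main obstacle is precisely this local descent, namely that $p\mid Q(w)$ pushes $w$ into the descending sublattice. For odd $p$ this is the short residue computation indicated above, but for $p=2$ it requires a careful case analysis along the Jordan-type classification of Lemma \ref{even} together with the non-classic integral normalization. The delicate point is that the index-$2$ hypothesis and the standing assumption $g(N)=g_{N,M}=g(M)$—which rules out the non-$H$-type and $E$-type configurations—are exactly what force the unimodular component to have the correct rank and ensure that the descendant lands in $\spn(M_i)$ rather than in the complementary spinor genus.
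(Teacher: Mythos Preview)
Your proposal is correct and follows essentially the same route as the paper: reduce to the prime case via the juxtaposition structure of Corollary~\ref{step-r2}, handle the prime case through the dichotomy of Lemma~\ref{important}, and induct on the number of prime factors using that each intermediate bipartite graph is $(1,1)$-regular.

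One remark: you over-complicate the $\lambda_p$ case at $p=2$. The local descent ``$p\mid Q(w)\Rightarrow w\in\Lambda_p(N')$'' follows uniformly for every prime from the definition of $\Lambda_p$ together with the index-$p$ hypothesis, with no appeal to Lemma~\ref{even} or to the assumption $g(N)=g_{N,M}$. Indeed, if $[N':\Lambda_p(N')]=p$ and $w\notin\Lambda_p(N')$, then $N'=\Lambda_p(N')+\z w$; writing any $z\in N'$ as $\lambda+mw$ with $\lambda\in\Lambda_p(N')$ and using $Q(w+\lambda)\equiv Q(w)\pmod p$ and $Q(\lambda)\equiv 0\pmod p$ gives $2B(w,z)\equiv 0\pmod p$, so $Q(w)\not\equiv 0\pmod p$. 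Thus the unimodular-rank and Jordan-type considerations are unnecessary here. The standing hypothesis $g(N)=g_{N,M}=g(M)$ enters only to guarantee that the descendant (whether $\lambda_p(N')$ or a $\Gamma_p$-descendant) lies in $\spn(M_i)$, exactly as you say in the $\Gamma_p$ branch.
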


\begin{proof} First assume that $n=p$ is a prime.  Then  $\lambda_p(N)\simeq M$ or $M$ is a $\Gamma_p$-descendant of $N$ by Lemma \ref{important}. If the former holds, one may easily show that $N$ represents $pk$ if and only if $M$ represents $k$.
If the latter holds, then also one may easily show that $N$ represents $pk$ if and only if at least one of $\Gamma_p$-descendants of $N$ represents $k$. Hence the lemma follows directly from this.

Let $n=p_1p_2\cdots p_r$, where each $p_i$ is a prime. We may assume that $M^n$ is a sublattice of $N$ with index $n$. Let $\{x_1,x_2,x_3\}$ be a basis for $N$ such that $M^n=\z x_1+\z x_2+ \z (nx_3)$. Define 
$$
N(i)=(\z x_1+\z x_2+\z (p_1p_2\dots p_ix_3))^{\frac1{p_1p_2\dots p_i}}.
$$ 
Then the graph $\mathfrak G(N,M)$ is a juxtaposition of the graphs 
$$
\mathfrak G(N,N(1)), \mathfrak G(N(1),N(2)), \dots,  \mathfrak G(N(r-1),M).
$$
Since $g(N)=g_{N,M}=g(M)$, we know that each graph $\mathfrak G(N(i),N(i+1))$ is a $(1,1)$-regular graph. Hence the lemma follows directly from the induction on $r$.
\end{proof}

A set $S=\{c_1,c_2,\dots,c_g\}$ of integers is said to be a complete system of spinor exceptional integers for $\gen(L)$, for some ternary $\z$-lattice $L$, if for any subset $U \subset  S$, there is a unique $\spn(L') \in \gen(L)_S$ such that every integer in $U$ is represented by $\spn(L')$ and every integer in $S-U$ is not represented by $\spn(L')$. For details, see \cite {bh}.

\begin{cor}\label{complete sys}
Under the same assumptions given above, suppose that there is a complete system $\{c_1,c_2,\dots,c_g\}$ of spinor exceptional integers for $\gen(M)$. Then $\{nc_1,$ $nc_2, \cdots,nc_g\}$ is a complete system of spinor exceptional integers for $\gen(N)$.  
\end{cor}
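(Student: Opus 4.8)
The plan is to transport the defining property of a complete system of spinor exceptional integers from $\gen(M)$ to $\gen(N)$ through the bijection of spinor genera $\spn(N_i)\leftrightarrow\spn(M_i)$ together with the representation criterion established in the preceding lemma. Recall that under the standing assumptions $g(N)=g_{N,M}=g(M)=g$, so both $\gen(N)_S$ and $\gen(M)_S$ have exactly $g$ elements, indexed so that $\spn(N_i)$ and $\spn(M_i)$ are joined by the unique edge of $\mathfrak{G}(N,M)$ for each $i$, and the lemma supplies the biconditional that $\spn(N_i)$ represents $nk$ if and only if $\spn(M_i)$ represents $k$, for every positive integer $k$.

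First I would record that the $c_j$ are distinct (being the $g$ elements of a set) and positive (each is represented by some spinor genus of the positive definite lattice $M$), whence, since $n>0$, the integers $nc_1,\dots,nc_g$ are also distinct; thus $\{nc_1,\dots,nc_g\}$ is a genuine $g$-element set to which the definition applies, and the lemma may be invoked with $k=c_j$. Next, given an arbitrary subset $\tilde U\subseteq\{nc_1,\dots,nc_g\}$, I would associate the subset $U=\{c_j:nc_j\in\tilde U\}\subseteq\{c_1,\dots,c_g\}$; the map $nc_j\mapsto c_j$ is a bijection carrying $\tilde U$ to $U$ and the complement to the complement.

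For existence, the hypothesis that $\{c_1,\dots,c_g\}$ is a complete system for $\gen(M)$ yields a unique index $i$ with $\spn(M_i)$ representing every element of $U$ and no element of $\{c_1,\dots,c_g\}-U$. Applying the lemma termwise then shows that $\spn(N_i)$ represents every element of $\tilde U$ and no element of $\{nc_1,\dots,nc_g\}-\tilde U$, which is exactly the existence clause. For uniqueness, if some $\spn(N_{i'})$ exhibited the same representation pattern relative to $\tilde U$, then reading the lemma in the reverse direction forces $\spn(M_{i'})$ to represent precisely $U$ and nothing else of $S$; by the uniqueness clause of the complete-system hypothesis for $M$ we get $i'=i$, and hence $\spn(N_{i'})=\spn(N_i)$.

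There is no serious obstacle here: the corollary is essentially a dictionary translation of the preceding lemma, with all of the substantive content already absorbed into that lemma and into the equality $g(N)=g_{N,M}=g(M)$, which guarantees the perfect matching of spinor genera. The only point requiring a moment's care is confirming that the correspondence $\tilde U\leftrightarrow U$ on subsets is compatible with the bijection $\spn(N_i)\leftrightarrow\spn(M_i)$ on spinor genera, so that the existence and uniqueness clauses transfer simultaneously; this is immediate once the common indexing and the biconditional of the lemma are in place.
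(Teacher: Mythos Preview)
Your proposal is correct and follows exactly the route the paper takes: the paper's own proof is the single line ``The corollary follows directly from the above lemma,'' and what you have written is simply a careful unpacking of that sentence via the bijection $\spn(N_i)\leftrightarrow\spn(M_i)$ and the biconditional of the lemma.
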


\begin{proof}
The corollary follows directly form the above lemma.
\end{proof}

The following example was first introduced by Jagy in \cite{Jagy}.

\begin{exam} {\rm
Let $n$ be a square free integer that is represented by a sum of two integral squares. Define ternary $\z$-lattices 
$$
N=\langle1,1,16n\rangle \ \ \text{and}\ \ M=\langle1,1,16\rangle.
$$ 
Then one may easily check that $([N],[M])$ is a representable pair by scaling $n$ and $g(N)=g_{N,M}=g(M)=2$.
Note that $\{1\}$ is a complete system of spinor exceptional integers for $\gen(M)$. By Corollary \ref{complete sys}, we know that $\{n\}$ is a complete system of spinor exceptional integers for $\gen(N)$.
In fact, Jagy proved in \cite{Jagy} that $n$ is represented by any $\z$-lattice in $\spn(N)$. However one may easily show that it holds even if  $n$ is not square free.}
\end{exam}


\begin{thebibliography}{abcd}

\bibitem {bh} J. W. Benham and J. S. Hsia, {\em On spinor exceptional representations}, Nagoya Math. J. $\mathbf{87}$(1982), 247--260.



\bibitem {lambda} W. K. Chan and  A.G. Earnest, {\em Discriminant bounds for spinor regular ternary quadratic lattices},  J. London Math. Soc. (2) $\mathbf{69}$(2004), 545--561. 

\bibitem {lambda2} W. K. Chan, B.-K. Oh, {\em Finiteness theorems for positive definite n -regular quadratic forms},  
Trans. Amer. Math. Soc.  $\mathbf{355}$(2003), 2385--2396. 


\bibitem {ds} W. Duke, and R. Schulze-Pillot, {\em Representation of integers by positive ternary quadratic forms and equidistribution of lattice points on ellipsoids}, Invent. Math. $\mathbf{99}$(1990), 49-?57. 

\bibitem {local} A. G. Earnest and J. S. Hsia, {\em Spinor norms of local integral rotations, {\rm II}}, Pacific J. Math. $\mathbf{61}$(1975), 71--86.

\bibitem {Jochner} J. S. Hsia and M. J\"ochner, {\em Almost strong approximations for definite quadratic spaces}
Invent. Math. $\mathbf{129}$(1997),  471-487.

\bibitem {Jagy} W. C. Jagy, {\em Integral positive ternary quadratic forms}, Quadratic and higher degree forms,  Dev. Math. $\mathbf {31}$(2013), 169--179. 


\bibitem {graph} J. Ju, I. Lee and B.-K. Oh, {\em A generalization of Watson transformation and representations of ternary quadratic forms}, J. Number Theory $\mathbf {167}$(2016), 202--231.


\bibitem {ki} Y. Kitaoka, {\em Arithmetic of quadratic forms}, Cambridge University Press, 1993.



\bibitem  {om} O. T. O'Meara, {\em Introduction to quadratic forms}, Springer Verlag,
New York, 1963.



\bibitem {sp1} R. Schulze-Pillot, {\em Darstellung durch definite ternare quadratische Formen und das Bruhat-
Tits-Gebaude der Spingruppe}, Dissertation U, G\"ottingen 1979.

\bibitem {sp2} R. Schulze-Pillot, {\em Darstellung durch spinorgeschlechter tern\"arer quadratischer formen}
, J. Number Theory $\mathbf{12}$(1980), 529--540.


\end{thebibliography}
\end{document}